\documentclass[reqno,12pt, oneside]{amsart}

\usepackage{enumerate}
\usepackage[nobysame]{amsrefs}
\usepackage{enumitem}
\usepackage{amssymb}
\usepackage{amsmath}
\usepackage{amsthm}
\usepackage{mathtools}
\usepackage{amscd}
\usepackage{microtype}
\usepackage[right=2.5cm,left=2.5cm, top=2.5cm, bottom=2.5cm,includehead]{geometry}
\usepackage{amsfonts}

\usepackage{tgpagella}
\usepackage{eulervm}

\usepackage[backgroundcolor=lightgray]{todonotes}

\theoremstyle{plain}
\newtheorem{theorem}{Theorem}
\newtheorem{corollary}[theorem]{Corollary}

\newtheorem{proposition}[theorem]{Proposition}

\theoremstyle{remark}
\newtheorem{remark}[theorem]{Remark}
\newtheorem{example}[theorem]{Example}

\theoremstyle{definition}
\newtheorem{definition}[theorem]{Definition}

\newtheorem*{question}{Open problem}
\newtheorem*{convention}{Important convention}

\DeclarePairedDelimiter{\abs}{\lvert}{\rvert}

\DeclarePairedDelimiter{\norm}{\lVert}{\rVert}

\DeclarePairedDelimiterX{\dset}[2]{\lbrace}{\rbrace}{#1\;\delimsize|\;#2}

\DeclareMathOperator{\conv}{conv}
\DeclareMathOperator{\clconv}{\overline{\conv}}

\DeclareMathOperator{\diam}{diam}

\newcommand{\ball}{{\overline{B}}}

\newcommand{\dotminus}{\mathbin{\smash{-\kern-0.55em\raisebox{0.5ex}{\scalebox{1.3}{$\cdot$}}\kern0.2em}}}
\newcommand{\N}{\mathbb{N}}

\title[Axiomatic measures of non-compactness]{On the independence of axioms for measures of non-compactness}

\author[J.Grzybowski]{Jerzy Grzybowski} 
\address{J. Grzybowski, Department of Nonlinear Analysis and Applied Topology\\
  Faculty of Mathematics and Computer Science\\
  Adam Mickiewicz University, Pozna\'n\\
  ul.\ Uniwersytetu Pozna\'nskiego 4\\
  61-614 Pozna\'n\\
  Poland}
\email{jgrz@amu.edu.pl} 

\author[P.Kasprzak]{Piotr Kasprzak} 
\address{P. Kasprzak, Department of Nonlinear Analysis and Applied Topology\\
  Faculty of Mathematics and Computer Science\\
  Adam Mickiewicz University, Pozna\'n\\
  ul.\ Uniwersytetu Pozna\'nskiego 4\\
  61-614 Pozna\'n\\
  Poland}
\email{kasp@amu.edu.pl} 

\author[P.Kasprzak]{Piotr Ma\'ckowiak} 
\address{Piotr Ma\'ckowiak, Department of Nonlinear Analysis and Applied Topology\\
  Faculty of Mathematics and Computer Science\\
  Adam Mickiewicz University, Pozna\'n\\
  ul.\ Uniwersytetu Pozna\'nskiego 4\\
  61-614 Pozna\'n\\
  Poland}
\email{piotr.mackowiak@amu.edu.pl} 

\date{\today}
\keywords{Axiomatic measures of non-compactness, Banach spaces, independence of axioms}
\subjclass[2020]{47H08}

\begin{document}

\begin{abstract} 
We investigate the independence of the axioms in the Banaś--Goebel approach to the theory of measures of non-compactness. We present a complete answer for four of the five axioms. For the remaining axiom, we provide a full solution in finite-dimensional spaces and a partial one in the infinite-dimensional case.
\end{abstract}

\maketitle
	
\section{Introduction}

The notion of a measure of non-compactness was introduced by Kuratowski in 1930 (see~\cite{Kuratowski}*{p.~303}). For several decades, it remained largely unnoticed. It re-emerged in 1950s and 1960s, when it became an important tool in operator theory and fixed point theory, particularly in the works of Darbo, Sadovskii, and others (see, for example, the classical papers~\cites{Darbo, GM65, GGM57, Sadovskii}). This period also marked the beginning of efforts to place various notions of non-compactness within a unified framework.

One of the earliest axiomatic approaches in this direction was due to Sadovskii (see~\cites{Sadovskii1968, Sadovskii1972}, cf. also~\cite{AKPRS}*{Definition~1.2.1}). In that setting, a set function $\nu$, defined on the family of all non-empty bounded subsets of a locally convex space, is called a measure of non-compactness if it is invariant under passage to the closed convex hull, that is,
\[
 \text{$\nu(\clconv{A})=\nu(A)$ for every bounded set $A$.}
\]
A different axiomatic approach in metric spaces was later proposed by Martin\'{o}n in~1990 (see~\cite{Martinon1990}). 

Another development in this direction is the notion of a quasimeasure of non-compactness introduced by Krukowski in~2016 (see~\cites{Krukowski2016, Krukowski2017}). In this approach, the set function is not required to be invariant under taking convex hulls. This results in a framework that seems too broad to be used directly in classical Darbo- or Sadovskii-type fixed point theorems. Nevertheless, certain fixed point results can still be obtained by combining quasimeasures with an additional control of non-convexity (see~\cite{Krukowski2017}*{Theorem~24} and~\cite{BugajewskiGulgowski2019}*{Theorem~4}). A concrete examples of quasimeasures were provided, for example, by Krukowski in~\cite{Krukowski2016} in the case of the space of bounded and continuous mappings, and by Bugajewski and Gulgowski in~\cite{BugajewskiGulgowski2019} in the case of the space of functions of bounded Jordan variation.

Among the various axiomatic approaches that have been proposed, the framework introduced by Banaś and Goebel in their 1980 monograph (see~\cite{BG80}) has proved particularly influential and is nowadays regarded as one of the standard approaches to the theory.

Measures of non-compactness, especially within the Banaś--Goebel framework, play an important role in fixed point theory as well as in the study of integral and differential equations, evolution equations, differential inclusions, semigroup theory, and dynamical systems. For a detailed overview and applications, we refer the reader to the monographs~\cites{AKPRS, ADL, MR3289625, MR3642943}.

While the classical literature is largely concerned with applications of measures of non-compactness, more recent work has focused on structural and qualitative aspects of the theory. This includes general representation results for abstract measures of non-compactness on Banach spaces (see~\cites{ChenCheng2023, ChengChengShenTuZhang2018}), as well as investigations into the existence of inequivalent measures (see~\cites{MalletParetNussbaum2011a, MalletParetNussbaum2011b, AbletChengChengZhang2019, BanasMartinon1992}). Other directions include questions of countable determination of the Kuratowski measure~\cite{ChenCheng2021} and the study of minimal sets associated with measures of non-compactness (see~\cite{DominguezBenavides1986} and~\cite{ADL}*{Chapter~III}). For a broader discussion of these topics, we refer to the recent survey~\cite{MR4686571}.

One aspect of the theory of axiomatic measures of non-compactness that is still not fully understood concerns the independence of the defining axioms. This issue is important from both theoretical and practical perspectives, as it clarifies whether the axiom system is minimal and, consequently, whether the assumptions used in applications are in some sense optimal.

In this paper we investigate the independence of the axioms in the Banaś--Goebel framework. We present a complete answer for four of the five axioms. In the case of the generalized Cantor intersection property, we restate a general result originally published in Chinese in~\cite{ChenChengHe2023} (see also Section~\ref{sec:v} for further details) and present an example that is essentially different from the one given therein. For the remaining axiom, we solve the problem completely in finite-dimensional spaces and obtain only partial results in the infinite-dimensional setting.

The paper is organized as follows. In Section~2 we collect notation and recall the definition of an axiomatic measure of non-compactness due to Banaś and Goebel. Section~3 reviews known properties of axiomatic measures of non-compactness and related set functions, and develops several new refinements, extensions, and equivalent reformulations of the axioms. Section~4 is devoted to the independence of the axioms. Finally, in Section~5 we present equivalent reformulations of the definition of an axiomatic measure of non-compactness in both finite- and infinite-dimensional settings, based on the results obtained in the previous sections. The paper concludes with an open question.

\section{Notation and basic definitions}

\subsection{Notation}

Throughout the paper, $E$ denotes a Banach space (over either the field of real or complex numbers) endowed with the norm $\norm{\cdot}_E$. For any point $x \in E$ and any $r>0$, we write $\ball_E(x,r)$ for the closed ball in $E$ with center $x$ and radius $r$. Furthermore, we denote by $\mathcal{B}_E$ the collection of all non-empty, bounded subsets of $E$, and by $\mathcal{K}_E$ its subfamily consisting of relatively compact sets. For two non-empty subsets $A, B$ of $E$ and $\lambda \in \mathbb R$ we define $A+B:=\dset{a+b}{a\in A,\ b\in B}$ and $\lambda A:=\dset{\lambda a}{a\in A}$. In particular, if $A=\{x\}$ is a singleton, then $A+B$ coincides with the translate $x+B$ of $B$. 

We will also use the Hausdorff distance $d_H$, defined for $A,B \in \mathcal B_E$ by
\[
 d_H(A,B):=\inf\dset[\big]{r>0}{\text{$A\subseteq B+\ball_E(0,r)$ and $B\subseteq A+\ball_E(0,r)$}}.
\]
It is well-known that $d_H$ is a pseudo-metric on $\mathcal B_E$, meaning that it is symmetric and satisfies the triangle inequality, but the remaining third condition is relaxed to $d_H(A, A) = 0$ for all $A \in \mathcal B_E$. Actually, it can be easily checked that $d_H(A,B)=0$ if and only if the closures of $A$ and $B$ coincide. 

\subsection{Axiomatic measures of non-compactness}
\label{sec:aMNCs}

Let us now recall the main object of our study, namely the definition of an axiomatic measure of non-compactness due to Bana\'s and Goebel.

\begin{definition}[cf.~\cite{BG80}*{Definition~3.1.3}]\label{def:axiomatic_MNC}
Let $E$ be a Banach space. A function $\mu \colon \mathcal B_E \to [0,+\infty)$ is called an \emph{axiomatic measure of non-compactness}, if
\begin{enumerate}[label=\textup{(A\arabic*)}]
 \item\label{def:axiomatic_MNC_i} the family $\ker \mu  \coloneqq \dset{A \in \mathcal B_E}{\mu(A)=0}$, called the \emph{kernel} of the measure $\mu$, is non-empty, and is included in $\mathcal K_E$,
\end{enumerate}
and for any $\lambda  \in [0,1]$ and any $A,B, A_n \in \mathcal B_E$, where $n \in \mathbb N$, it satisfies the conditions\textup:
\begin{enumerate}[label=\textup{(A\arabic*)}, resume]
 \item\label{def:axiomatic_MNC_ii} if $A \subseteq B$, then $\mu(A)\leq \mu(B)$,

 \item\label{def:axiomatic_MNC_iii} $\mu(\conv A)=\mu(A)$,

 \item\label{def:axiomatic_MNC_iv} $\mu(\lambda A + (1-\lambda)B)\leq \lambda \mu(A) + (1-\lambda)\mu(B)$,

 \item\label{def:axiomatic_MNC_v} if the sets $A_n$ are closed and $A_{n+1}\subseteq A_n$ for $n \in \mathbb N$ with $\mu(A_n)\to 0$, then their intersection $\bigcap_{n=1}^\infty A_n$ is non-empty.
\end{enumerate}
\end{definition}

Axioms~\ref{def:axiomatic_MNC_ii} and~\ref{def:axiomatic_MNC_iv} are commonly referred to as the \emph{monotonicity} and \emph{convexity} of the measure $\mu$, respectively, while axiom~\ref{def:axiomatic_MNC_v} is known as the \emph{generalized Cantor \textup(intersection\textup) theorem}.

The simplest examples of axiomatic measures of non-compactness are the diameter
\[
 A\mapsto \diam A:=\sup_{x,y \in A}\norm{x-y}_E
\]
and the norm function
\[
 A \mapsto \norm{A}:=\sup_{x\in A}\norm{x}_E.
\]
Of course, the classical measures of Kuratowski, Hausdorff, and Istr\v{a}\c{t}escu also fit into the axiomatic framework. (For more information on those measures, see, for example,~\cites{ADL, AKPRS, BG80, BBK}.)

\begin{convention}
Throughout the paper, we distinguish between axiomatic measures of non-compactness and
real-valued set functions defined on $\mathcal{B}_E$ that satisfy only some of the axioms. The former will always be denoted by $\mu$, while the latter will be denoted by $\nu$. Also, by a slight abuse of terminology, we will refer to the set $\ker \nu$ as the \emph{kernel}, even when $\nu$ is not an axiomatic measure of non-compactness.
\end{convention}

\section{Preliminary results}

When introducing the axiomatic system, Bana\'s and Goebel already observed that
axiomatic measures of non-compactness are continuous with respect to the
Hausdorff distance (cf.~\cite{BG80}*{Theorem~3.2.2}). This property, in
fact, extends to a broader class of set functions.

\begin{proposition}\label{prop:continuity_nu}
Let $E$ be a Banach space, and let $\nu \colon \mathcal B_E \to [0,+\infty)$ be a set function satisfying axiom~\ref{def:axiomatic_MNC_ii}. Moreover, assume that there exists $\lambda \in (0,1)$ such that
\begin{equation}\label{eq:iv}
  \text{$\nu(\lambda A + (1-\lambda) B)\leq \lambda \nu(A) + (1-\lambda)\nu(B)$ for all $A,B \in \mathcal B_E$.}
\end{equation}
Then, $\nu$ is continuous with respect to the
Hausdorff distance, meaning that if $(A_n)_{n\in\mathbb N}$ is a sequence of non-empty, bounded subsets of $E$ converging in the Hausdorff distance to a non-empty, bounded set $A$, then $\lim_{n\to\infty} \nu(A_n) = \nu(A)$.
\end{proposition}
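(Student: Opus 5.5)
The plan is to reduce Hausdorff continuity to a quantitative estimate for the function $r\mapsto \nu(A+\ball_E(0,r))$, using only monotonicity~\ref{def:axiomatic_MNC_ii} and the single-$\lambda$ convexity inequality~\eqref{eq:iv}. Write $\ball:=\ball_E(0,1)$.

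\textbf{Key inclusion.} For any $C\in\mathcal B_E$ and $r\ge 0$, every point $c+(1-\lambda)rb$ with $c\in C$ and $b\in\ball$ can be written as
\[
 c+(1-\lambda)rb=\lambda c+(1-\lambda)(c+rb).
\]
Hence
\[
 C+(1-\lambda)r\ball\subseteq \lambda C+(1-\lambda)(C+r\ball).
\]
By~\ref{def:axiomatic_MNC_ii} and~\eqref{eq:iv} this gives
\[
 \nu\bigl(C+(1-\lambda)r\ball\bigr)-\nu(C)\le (1-\lambda)\bigl(\nu(C+r\ball)-\nu(C)\bigr).
\]
Iterating $k$ times yields
\[
 0\le \nu\bigl(C+(1-\lambda)^k r\ball\bigr)-\nu(C)\le (1-\lambda)^k\bigl(\nu(C+r\ball)-\nu(C)\bigr)\le (1-\lambda)^k\,\nu(C+r\ball).
\]
The left inequality is monotonicity, and the right-hand side is finite because $\nu$ is real-valued. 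Since $r\mapsto\nu(C+r\ball)$ is nondecreasing, it follows that $\nu(C+s\ball)\to\nu(C)$ as $s\to0^+$.

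\textbf{Upper bound.} Let $d_H(A_n,A)\to0$ and fix $\varepsilon>0$. For large $n$ we have $A_n\subseteq A+s\ball$ for some $s<\varepsilon$, so $\nu(A_n)\le\nu(A+\varepsilon\ball)$. Therefore $\limsup_n\nu(A_n)\le\nu(A+\varepsilon\ball)$. Letting $\varepsilon\to0$ and applying the estimate with $C=A$ gives $\limsup_n\nu(A_n)\le\nu(A)$.

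\textbf{Lower bound (the main obstacle).} Here we need the estimate uniformly in $n$, since we only know $A\subseteq A_n+\varepsilon\ball$ and hence $\nu(A)\le\nu(A_n+\varepsilon\ball)$. The uniformity comes from the explicit bound above, applied with $C=A_n$ and $r=1$. For large $n$ we have $d_H(A_n,A)<1$, so $A_n+\ball\subseteq A+2\ball$, and monotonicity gives
\[
 \nu(A_n+\ball)\le M:=\nu(A+2\ball)<\infty.
\]
Choosing $\varepsilon=(1-\lambda)^k$ then gives
\[
 \nu(A)\le\nu(A_n)+(1-\lambda)^kM
\]
for all sufficiently large $n$. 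Hence $\liminf_n\nu(A_n)\ge\nu(A)-(1-\lambda)^kM$, and letting $k\to\infty$ gives $\liminf_n\nu(A_n)\ge\nu(A)$. Combined with the upper bound, this proves $\lim_{n\to\infty}\nu(A_n)=\nu(A)$.
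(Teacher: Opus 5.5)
Your proof is correct and follows essentially the paper's route. Both arguments rest on three ingredients: the inclusion $C+t\ball\subseteq(1-t)C+t(C+\ball)$, a convexity inequality with geometrically decaying weight obtained by iterating~\eqref{eq:iv} together with monotonicity, and a uniform bound by $\nu$ of a large bounded set ($\nu(A+2\ball)$ for you, $\nu(\ball_E(0,r+2))$ in the paper). The differences are cosmetic: you iterate directly on the radius rather than first proving~\eqref{eq:iv} for all powers $\lambda^k$, and you argue via $\limsup$ and $\liminf$ instead of the subsequence principle.
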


\begin{proof}
First, note that if the inequality $\nu(\lambda A + (1-\lambda) B)\leq \lambda \nu(A) + (1-\lambda)\nu(B)$ holds for all $A,B \in \mathcal B_E$, then it also remains valid
when $\lambda$ is replaced by $\lambda^k$ for any $k \in \mathbb N$. To see this, assume that for some fixed $k \in \mathbb N$ we already have $\nu(\lambda^k A + (1-\lambda^k) B)\leq \lambda^k \nu(A) + (1-\lambda^k)\nu(B)$. Then, for arbitrary $A,B \in \mathcal B_E$ we have 
\[
 \lambda^{k+1} A + (1-\lambda^{k+1})B \subseteq \lambda\bigl(\lambda^k A + (1-\lambda^k)B\bigr) + (1-\lambda)B.
\]
Therefore, by axiom~\ref{def:axiomatic_MNC_ii} and the inductive assumption, we obtain
\begin{align*}
\nu(\lambda^{k+1} A+(1-\lambda^{k+1})B)
 & \leq \nu\bigl(\lambda\bigl(\lambda^k A+(1-\lambda^k)B\bigr)+(1-\lambda)B\bigr)\\
 & \leq \lambda\nu(\lambda^k A+(1-\lambda^k)B)+(1-\lambda)\nu(B)\\
 & \leq \lambda\bigl(\lambda^k \nu(A)+(1-\lambda^k)\nu(B)\bigr)+(1-\lambda)\nu(B)\\
 & =\lambda^{k+1}\nu(A) + (1-\lambda^{k+1})\nu(B).
\end{align*}
This establishes the statement for $k+1$, and hence for all $k \in \mathbb{N}$.

Now, let $(A_n)_{n\in\mathbb N}$ be a sequence of non-empty, bounded subsets of $E$ converging in the Hausdorff distance to a non-empty, bounded set $A$. Choose $r>0$ such that $A\subseteq \ball_E(0,r)$. Our goal is to show that the sequence of real numbers $(\nu(A_n))_{n\in\mathbb N}$ converges to $\nu(A)$. To this end, we apply the \emph{subsequence principle}: a sequence $(a_n)_{n \in \mathbb N}$ converges to $a$ if and only if every subsequence $(a_{n_k})_{k \in \mathbb N}$ contains a further subsequence $(a_{n_{k_l}})_{l \in \mathbb N}$ converging to $a$. We now proceed as follows. Let $(\nu(A_{n_k}))_{k\in\mathbb N}$ be an arbitrary subsequence of $(\nu(A_n))_{n\in\mathbb N}$. We extract a
further subsequence $(\nu(A_{n_{k_l}}))_{l\in\mathbb N}$ in such a way that $d_H(A,A_{n_{k_l}}) < \lambda^l$ for all $l \in \mathbb N$. This is clearly possible since $d_H(A,A_n) \to 0$ as $n \to +\infty$. From the definition of the Hausdorff distance, it follows that for every $l \in \mathbb N$ we have       
\begin{align*}
 A_{n_{k_l}} &\subseteq A+\lambda^l \ball_E(0,1) \subseteq (1-\lambda^l)A + \lambda^l\bigl(A+\ball_E(0,1)\bigr)\\
& \subseteq (1-\lambda^l)A + \lambda^l \ball_E(0,r+1) \subseteq \ball_E(0,r+1)
\end{align*}
and, similarly,
\begin{align*}
 A &\subseteq A_{n_{k_l}} + \lambda^l \ball_E(0,1) \subseteq (1-\lambda^l)A_{n_{k_l}} + \lambda^l\bigl(A_{n_{k_l}}+ \ball_E(0,1)\bigr)\\
 & \subseteq (1-\lambda^l)A_{n_{k_l}} + \lambda^l \ball_E(0,r+2).
\end{align*}
Hence, combining the above inclusions with axiom~\ref{def:axiomatic_MNC_ii}
and the first part of the proof, we obtain
\[
 \nu(A_{n_{k_l}}) \leq (1-\lambda^l)\nu(A) + \lambda^l \nu(\ball_E(0,r+1)) 
\]
and
\[
  \nu(A) \leq (1-\lambda^l) \nu(A_{n_{k_l}}) + \lambda^l \nu(\ball_E(0,r+2)).
\]
Letting $l \to +\infty$, we conclude that $\nu(A_{n_{k_l}}) \to \nu(A)$. By the subsequence principle, this completes the proof.
\end{proof}

\begin{remark}\label{rem:axiom_closure_remark}
One immediate consequence of Proposition~\ref{prop:continuity_nu} is that every axiomatic measure of non-compactness is invariant under taking closures, that is, $\mu(\overline{A}) = \mu(A)$ for all $A \in \mathcal B_E$. Although this fact was already observed by Bana\'s and Goebel (see~\cite{BG80}*{p.~13}), many authors include it explicitly among the axioms for the sake of simplicity (see, for example,~\cites{MR3289625, MR3642943}). 
\end{remark}

Now that we have discussed a property of axiomatic measures of non-compactness
which is not explicitly included among the axioms in Definition~\ref{def:axiomatic_MNC}, we return to the natural order in which the axioms are introduced.

In all works on axiomatic measures of non-com\-pact\-ness known to the authors, the kernel of the measure $\mu$ in axiom~\ref{def:axiomatic_MNC_i} is always assumed to be a subset of $\mathcal K_E$. Surprisingly, this property already follows from axioms~\ref{def:axiomatic_MNC_ii} and~\ref{def:axiomatic_MNC_v}.

\begin{proposition}\label{prop:axiom_i_remark}
Let $E$ be a Banach space, and let $\nu \colon \mathcal B_E \to [0,+\infty)$ be a set function that satisfies axioms~\ref{def:axiomatic_MNC_ii} and~\ref{def:axiomatic_MNC_v}. Then, $\ker \nu \subseteq \mathcal K_E$. 
\end{proposition}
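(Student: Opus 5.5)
We argue by contradiction. Suppose $A \in \ker \nu$ is not relatively compact. Since $E$ is complete, $A$ is not totally bounded, so there exist $\varepsilon>0$ and a sequence $(x_n)_{n\in\mathbb N}\subseteq A$ with $\norm{x_n-x_m}_E \ge \varepsilon$ for all $n\neq m$. We then set
\[
 A_n := \overline{\dset{x_k}{k \ge n}}.
\]
These are non-empty, closed, bounded sets forming a decreasing sequence. Each $A_n$ is contained in $\overline{A}$, but not necessarily in $A$, and this is the one delicate point: monotonicity~\ref{def:axiomatic_MNC_ii} only gives $\nu(B)\le\nu(A)=0$ for subsets $B\subseteq A$. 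We cannot invoke closure invariance (Remark~\ref{rem:axiom_closure_remark}), since no convexity assumption is made here.

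The way around this is to observe that the set $\dset{x_k}{k\ge n}$ is already closed. Indeed, any convergent sequence taken from it is Cauchy. Because distinct points are $\varepsilon$-separated, such a sequence must be eventually constant, so its limit is one of the $x_k$. Hence
\[
 A_n = \dset{x_k}{k \ge n} \subseteq A,
\]
and by~\ref{def:axiomatic_MNC_ii} we get $0 \le \nu(A_n) \le \nu(A) = 0$. So $\nu(A_n)=0$ for every $n$, and in particular $\nu(A_n)\to 0$.

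Axiom~\ref{def:axiomatic_MNC_v} now yields a point $x \in \bigcap_{n=1}^\infty A_n$. But every point lies in at most finitely many of the $A_n$: if $x = x_k$, then $x \notin A_{k+1}$, because $\norm{x_k - x_j}_E \ge \varepsilon > 0$ for all $j > k$. So the intersection is empty, a contradiction. Therefore $A$ is totally bounded, that is, $A \in \mathcal K_E$.

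The only step requiring care is ensuring that the closed sets fed into~\ref{def:axiomatic_MNC_v} lie inside $A$ itself. The separation of the sequence handles this.
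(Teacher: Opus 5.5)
Your proof is correct and follows essentially the same argument as the paper. Both take the tails of a sequence in $A$ with no convergent subsequence, note that these are closed subsets of $A$ so that $\nu$ vanishes on them by~\ref{def:axiomatic_MNC_ii}, and then derive a contradiction from the non-empty intersection guaranteed by~\ref{def:axiomatic_MNC_v}. Your use of an $\varepsilon$-separated sequence, rather than an arbitrary sequence without convergent subsequences, only makes the closedness of the tails and the emptiness of their intersection more explicit.
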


\begin{proof}
If the Banach space $E$ is finite-dimensional, the claim is immediate, since in this case $\mathcal B_E = \mathcal K_E$. Thus, we may assume that $E$ is infinite-dimensional. Suppose, on the contrary, that there exists a set $A \in \mathcal B_E \setminus \mathcal K_E$ such that $\nu(A)=0$. Since $A$ is not relatively compact, we can choose a sequence $(x_n)_{n \in \mathbb N}$ of points in $A$ with the property that none of its subsequences converges in $E$. For each $n \in \mathbb N$, let $A_n:=\{x_n, x_{n+1}, \ldots\}$. Clearly, these sets form a non-increasing sequence. They are also closed, since otherwise the sequence $(x_n)_{n \in \mathbb N}$ would admit a convergent subsequence. By axiom~\ref{def:axiomatic_MNC_ii}, we further obtain $0 \leq \nu(A_n) \leq \nu(A) = 0$ for all $n \in \mathbb N$. Hence, axiom~\ref{def:axiomatic_MNC_v} ensures that the intersection $\bigcap_{n=1}^\infty A_n$ is non-empty. However, any point in this intersection would serve as a limit of a subsequence of $(x_n)_{n \in \mathbb N}$, contradicting our assumption that the sequence has no convergent subsequences. Therefore, $\ker \nu \subseteq \mathcal K_E$.
\end{proof}

\begin{remark}\label{rem:axiom_ii_remark}
Note that axiom~\ref{def:axiomatic_MNC_ii} can be restated simply as
\[
 \text{$\nu(A \cup B) \geq \max\{\nu(A), \nu(B)\}$  for any $A, B \in \mathcal{B}_E$.}
\]
This inequality is generally strict. For example, in any Banach space $E$, take any elements $x \neq y$ and define $A:=\{x\}$ and $B:=\{y\}$. Then, using the diameter function, we have $\diam A=\diam B=0$, whereas $\diam(A\cup B)>0$.
\end{remark}

Extending the reasoning used in the first part of the proof of Proposition~\ref{prop:continuity_nu} leads to a result that may be of independent interest.

\begin{proposition}\label{prop:convexity_product}
Let $E$ be a Banach space, and let $\nu\colon \mathcal B_E \to [0,+\infty)$ be a set function satisfying axiom~\ref{def:axiomatic_MNC_ii}. Moreover, assume that condition~\eqref{eq:iv} holds for parameters $\lambda_1,\lambda_2 \in [0,1]$ in place of $\lambda$. Then,~\eqref{eq:iv} also holds with $\lambda:=\lambda_1 \cdot \lambda_2$. 
\end{proposition}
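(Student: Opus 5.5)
The plan is to repeat the one inductive step from the first part of the proof of Proposition~\ref{prop:continuity_nu}, now with two different parameters. We fix arbitrary $A,B \in \mathcal B_E$ and write $\lambda:=\lambda_1\lambda_2$.

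The first step is the set inclusion
\[
 \lambda_1\lambda_2 A + (1-\lambda_1\lambda_2)B \subseteq \lambda_1\bigl(\lambda_2 A + (1-\lambda_2)B\bigr) + (1-\lambda_1)B .
\]
To check it, take an element $\lambda_1\lambda_2 a + (1-\lambda_1\lambda_2)b$ with $a\in A$, $b\in B$. Since $1-\lambda_1\lambda_2 = \lambda_1(1-\lambda_2) + (1-\lambda_1)$, it equals $\lambda_1\bigl(\lambda_2 a + (1-\lambda_2)b\bigr) + (1-\lambda_1)b$. This lies in the right-hand side, using the same $b$ in both slots.

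Note that the inclusion is generally strict, since the right-hand side allows two different elements of $B$. This is exactly why monotonicity~\ref{def:axiomatic_MNC_ii} is needed. Applying~\ref{def:axiomatic_MNC_ii} and then condition~\eqref{eq:iv} with $\lambda_1$ (to the pair $\lambda_2A+(1-\lambda_2)B$ and $B$), we get
\begin{align*}
 \nu(\lambda A+(1-\lambda)B) &\leq \lambda_1\nu\bigl(\lambda_2 A+(1-\lambda_2)B\bigr)+(1-\lambda_1)\nu(B)\\
 &\leq \lambda_1\bigl(\lambda_2\nu(A)+(1-\lambda_2)\nu(B)\bigr)+(1-\lambda_1)\nu(B)\\
 &= \lambda\nu(A)+(1-\lambda)\nu(B).
\end{align*}
The second line uses~\eqref{eq:iv} with $\lambda_2$, and the last line is the same identity for $1-\lambda_1\lambda_2$ as above.

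Two bookkeeping points remain. The set $\lambda_2A+(1-\lambda_2)B$ is non-empty and bounded, so it belongs to $\mathcal B_E$ and~\eqref{eq:iv} may be applied to it. The endpoint cases $\lambda_i\in\{0,1\}$ need no separate treatment, since the identities hold for all $\lambda_1,\lambda_2\in[0,1]$ and $\lambda_1\lambda_2\in[0,1]$.

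There is no real obstacle. The only point needing care is the strictness of the inclusion, and monotonicity handles it.
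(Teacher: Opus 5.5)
Your proof is correct and is essentially the argument the paper intends: the paper gives no separate proof, saying only that it extends the inductive step in the first part of the proof of Proposition~\ref{prop:continuity_nu}. Your version is that step with the outer parameter $\lambda$ and inner parameter $\lambda^k$ replaced by $\lambda_1$ and $\lambda_2$, using the inclusion $\lambda_1\lambda_2 A+(1-\lambda_1\lambda_2)B\subseteq\lambda_1\bigl(\lambda_2A+(1-\lambda_2)B\bigr)+(1-\lambda_1)B$, monotonicity, and two applications of~\eqref{eq:iv}.
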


Interestingly, as in the classical setting of real-valued functions, midpoint convexity is equivalent to convexity, provided a mild regularity condition is imposed.

\begin{proposition}\label{ax:iv_product}
Let $E$ be a Banach space, and let $\nu \colon \mathcal B_E \to [0,+\infty)$ be a set function satisfying axiom~\ref{def:axiomatic_MNC_ii}. Then, the following conditions are equivalent\textup:
\begin{enumerate}[label=\textup{(\alph*)}, itemsep=2pt]
 \item\label{ax:iv_product_a} $\nu(\lambda A + (1-\lambda)B)\leq \lambda \nu(A) + (1-\lambda)\nu(B)$ for all $A,B \in \mathcal B_E$ and \emph{all} $\lambda \in [0,1]$,

 \item\label{ax:iv_product_b} $\nu(\lambda A + (1-\lambda)B)\leq \lambda \nu(A) + (1-\lambda)\nu(B)$ for all $A,B \in \mathcal B_E$ and \emph{some} $\lambda \in (0,1)$,

 \item $\nu(\frac{1}{2} A + \frac{1}{2} B)\leq \frac{1}{2} \nu(A) + \frac{1}{2}\nu(B)$ for all $A,B \in \mathcal B_E$.
\end{enumerate}
\end{proposition}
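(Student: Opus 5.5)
The implications (a)$\Rightarrow$(c)$\Rightarrow$(b) are trivial, since (c) is the case $\lambda=\tfrac12$ of (a) and is itself an instance of (b). The work is in (b)$\Rightarrow$(a). I would use the facts already available: monotonicity \ref{def:axiomatic_MNC_ii}, Proposition~\ref{prop:convexity_product} (the set of good parameters is closed under products), and Proposition~\ref{prop:continuity_nu} (under (b), $\nu$ is continuous with respect to $d_H$).

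Let $\Lambda\subseteq[0,1]$ be the set of $\lambda$ for which \eqref{eq:iv} holds for all $A,B$.

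\emph{Step 1: symmetry.} $\Lambda$ is symmetric under $\lambda\mapsto 1-\lambda$, because swapping $A$ and $B$ turns the inequality for $\lambda$ into the one for $1-\lambda$. Also $0,1\in\Lambda$ trivially.

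\emph{Step 2: a dense set of parameters.} Starting from some $\lambda_0\in(0,1)\cap\Lambda$, Proposition~\ref{prop:convexity_product} and symmetry show that $\Lambda$ is closed under products and under $t\mapsto1-t$. I would also prove closure under a ``concatenation'' operation: if $\alpha,\beta\in\Lambda$, then $\alpha+(1-\alpha)\beta\in\Lambda$. This follows from
\[
\gamma A+(1-\gamma)B\subseteq \alpha A+(1-\alpha)\bigl(\beta A+(1-\beta)B\bigr),\qquad \gamma:=\alpha+(1-\alpha)\beta,
\]
together with monotonicity. Note that we only have inclusions, not equalities, since $A\neq\tfrac12A+\tfrac12A$ for non-convex $A$; this is exactly why \ref{def:axiomatic_MNC_ii} is needed. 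The same observation gives $1-(1-\alpha)(1-\beta)\in\Lambda$, which is really just symmetry plus products. From $\lambda_0$ I get $\lambda_0^k\in\Lambda$, and these values tend to $0$. Given any target $t\in(0,1)$, I would build a greedy expansion: write $t$ as a limit of numbers obtained by repeatedly applying $\gamma=\alpha+(1-\alpha)\beta$ with $\alpha$ of the form $\lambda_0^{k}$, which is the analogue of a digit expansion in base $\lambda_0$. This shows $\Lambda$ is dense in $[0,1]$. This density step is the main obstacle: one has to check that the semigroup generated by $\lambda_0$ under these operations is dense. I would argue the gaps shrink: if $s<u$ are both in $\Lambda$, then so are $s+(u-s)\cdot(\text{small element})$ type points, via the map $\beta\mapsto s+(1-s)\beta$ composed with products.

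\emph{Step 3: passing to the limit.} For arbitrary $\lambda\in[0,1]$, take $\lambda_n\in\Lambda$ with $\lambda_n\to\lambda$. Then
\[
d_H\bigl(\lambda_nA+(1-\lambda_n)B,\ \lambda A+(1-\lambda)B\bigr)\le|\lambda_n-\lambda|\bigl(\norm{A}+\norm{B}\bigr)\to0.
\]
By Proposition~\ref{prop:continuity_nu}, which applies since \eqref{eq:iv} holds for $\lambda_0\in(0,1)$, we get $\nu(\lambda_nA+(1-\lambda_n)B)\to\nu(\lambda A+(1-\lambda)B)$. The right-hand sides $\lambda_n\nu(A)+(1-\lambda_n)\nu(B)$ converge as well, so the inequality passes to the limit and (a) follows.
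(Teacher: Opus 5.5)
Your argument is correct and has the same skeleton as the paper's proof. Admissible parameters are closed under products (Proposition~\ref{prop:convexity_product}) and under $t\mapsto 1-t$ (swap $A$ and $B$). The target $\lambda$ is then approximated by admissible parameters, and Hausdorff continuity (Proposition~\ref{prop:continuity_nu}) carries the inequality to the limit. Your Step~3 is essentially the paper's final paragraph.

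The difference is in the approximation step. The paper fixes an admissible $\alpha>\lambda$ and builds greedily the decreasing sequence $\alpha_{n+1}=\alpha_n(1-\alpha^{k_{n+1}})$, with $k_{n+1}$ minimal subject to $\alpha_{n+1}>\lambda$. It then has to show that $(k_n)$ is non-decreasing and unbounded in order to get $\alpha_n\to\lambda$.

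You aim instead for density of $\Lambda$, but as written this is only a sketch, and you flag it yourself as the main obstacle. Moreover, the gap-shrinking mechanism you describe does not follow from the operations you list. Obtaining $s+(u-s)\varepsilon$ as $s+(1-s)\beta$ would require $\beta=(u-s)\varepsilon/(1-s)\in\Lambda$, which products and complements do not supply. The claim itself is true, via the inclusion $\gamma A+(1-\gamma)B\subseteq(1-\varepsilon)(sA+(1-s)B)+\varepsilon(uA+(1-u)B)$ with $\gamma=(1-\varepsilon)s+\varepsilon u$, but you do not need it.

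Density follows in one line from what you already have. Given $\varepsilon>0$, pick $\delta:=\lambda_0^k\in\Lambda\cap(0,1)$ with $\delta<\varepsilon$. Then $1-\delta\in\Lambda$, hence $(1-\delta)^n\in\Lambda$ for all $n\ge 0$. These numbers decrease from $1$ to $0$ with consecutive gaps $(1-\delta)^n\delta\le\delta$, so every $t\in[0,1]$ lies within $\varepsilon$ of one of them.

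With this filled in, your route is somewhat shorter than the paper's. It needs neither the reduction to $\alpha>\lambda$ nor the analysis of the exponents $k_n$. The paper's construction instead produces an explicit monotone sequence approaching $\lambda$ from above, which the continuity argument does not actually require.
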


\begin{proof}
Clearly, it suffices to prove the implication~$\ref{ax:iv_product_b} \Rightarrow~\ref{ax:iv_product_a}$.

Let us fix an arbitrary $\lambda \in (0,1)$, and assume that there exists $\alpha \in (0,1)$ such that  
\begin{equation}\label{eq:iv_i}
 \text{$\nu(\alpha A + (1-\alpha)B)\leq \alpha \nu(A) + (1-\alpha)\nu(B)$ for all $A,B \in \mathcal B_E$}.
\end{equation}
Clearly, we may assume that $\lambda \neq \alpha$; otherwise there is nothing to prove. Moreover, by symmetry of~\eqref{eq:iv_i}, we may further assume without loss of generality that $\alpha > \lambda$.

We now construct a non-increasing sequence $(\alpha_n)_{n\in \mathbb N}$, whose terms are finite products of $\alpha$ and factors of the form $1-\alpha^j$, where $j\in\mathbb N$, and which converges to $\lambda$. To begin, set $\alpha_0:=\alpha$ and for each $n\in \mathbb N \cup \{0\}$, define
\[
k_{n+1}:=\min\dset{j\in \mathbb N}{\alpha_n(1-\alpha^j)>\lambda},
\qquad
\alpha_{n+1}:=\alpha_n(1-\alpha^{k_{n+1}}).
\]
Note that for every $n\in \mathbb N$ we have
\[
\lambda < \alpha_n(1-\alpha^{k_{n+1}})
= \alpha_{n-1}(1-\alpha^{k_n})(1-\alpha^{k_{n+1}})
< \alpha_{n-1}(1-\alpha^{k_{n+1}}).
\]
By the minimality of $k_n$, this yields $k_{n+1}\geq k_n$. Hence, the sequence $(k_n)_{n\in\mathbb N}$ is non-decreasing. 

We next show that $(k_n)_{n\in\mathbb N}$ is unbounded. Suppose, for a contradiction, that it is bounded. Since $(k_n)_{n\in\mathbb N}$ is a non-decreasing sequence of positive integers, it must eventually stabilize. In other words, there exists $N\in\mathbb N$ such that $k_{N+n}=k_N$ for all $n\geq 0$. In this case, for $n\geq 0$ we obtain
\[
 \alpha_{N+n}=\prod_{j=1}^{N+n} \alpha_0 (1-\alpha^{k_j})=(1-\alpha^{k_N})^n \cdot \prod_{j=1}^{N} \alpha_0 (1-\alpha^{k_j}). 
\]
Since $0<1-\alpha^{k_N}<1$, it follows that $\alpha_{N+n}\to 0$ as $n\to +\infty$. This contradicts the fact that $\alpha_n>\lambda>0$ for all $n\in\mathbb N$. Therefore, $(k_n)_{n\in\mathbb N}$ must be unbounded.

Finally, we prove that the sequence $(\alpha_n)_{n \in \mathbb N}$ converges to $\lambda$.
Since $(\alpha_n)_{n\in\mathbb N}$ is decreasing and bounded from below by $\lambda$, it has a limit $a := \inf_{n \in \N} \alpha_n \geq \lambda$. Assume, for a contradiction, that $a > \lambda$. Since $\alpha \in (0,1)$, we can choose $l \in \mathbb N$ such that $a(1-\alpha^l) > \lambda$. By the definition of the infimum, it follows that $\alpha_n(1-\alpha^l) > \lambda$ for all $n \in \N$. In particular, this would imply that $k_n \leq l$ for all $n \geq 2$, contradicting the fact that the sequence $(k_n)_{n\in\mathbb N}$ is unbounded. Therefore, $\alpha_n \to \lambda$ as $n \to +\infty$.

We also observe that the sequence $(\alpha_n)_{n \in \mathbb{N}}$ constructed above has an interesting property. Namely, by Proposition~\ref{prop:convexity_product} and symmetry, condition~\eqref{eq:iv_i} continues to hold upon replacing $\alpha$ with $\alpha_n$, for every $n \in \mathbb{N}$.

Now, let $A,B \in \mathcal{B}_E$, and let $r>0$ be such that $A,B \subseteq \ball_E(0,r)$. Then, for any $n \in \mathbb{N}$ and any $a \in A$, $b \in B$, we have
\begin{align*}
 \lambda a + (1-\lambda)b
 &= \alpha_n a + (1-\alpha_n)b + (\alpha_n-\lambda)(b-a),
\end{align*}
and similarly,
\begin{align*}
 \alpha_n a + (1-\alpha_n)b
 &= \lambda a + (1-\lambda)b + (\lambda-\alpha_n)(b-a).
\end{align*}
It follows that
\[
 \lambda A + (1-\lambda)B \subseteq \alpha_n A + (1-\alpha_n) B + (\alpha_n-\lambda)\, \ball_E(0,2r),
\]
and likewise,
\[
 \alpha_n A + (1-\alpha_n)B \subseteq \lambda A + (1-\lambda)B + (\alpha_n-\lambda)\, \ball_E(0,2r).
\]
Hence,
\[
 d_H\bigl(\lambda A + (1-\lambda)B,\ \alpha_n A + (1-\alpha_n)B\bigr) \leq 2r(\alpha_n-\lambda) \to 0
 \quad \text{as } n \to +\infty.
\]
By~\eqref{eq:iv_i} applied to each term of the sequence $(\alpha_n)_{n \in \mathbb{N}}$ separately, in view of Proposition~\ref{prop:continuity_nu}, we therefore obtain
\begin{align*}
 \nu(\lambda A + (1-\lambda)B)
 &= \lim_{n \to \infty} \nu(\alpha_n A + (1-\alpha_n)B) \\
 &\leq \lim_{n \to \infty} \bigl(\alpha_n \nu(A) + (1-\alpha_n)\nu(B)\bigr) = \lambda \nu(A) + (1-\lambda)\nu(B).
\end{align*}
This completes the proof.
\end{proof}

We conclude this section with two remarks concerning axiom~\ref{def:axiomatic_MNC_v}.

\begin{remark}\label{rem:axiom_v_remark}
It is a well-known fact that the non-empty intersection $\bigcap_{n=1}^\infty A_n$ in axiom~\ref{def:axiomatic_MNC_v} belongs to the kernel of the measure $\mu$, although some texts list it as an additional requirement. Indeed, since $\bigcap_{n=1}^\infty A_n \subseteq A_k$ for every $k \in \mathbb{N}$, by axiom~\ref{def:axiomatic_MNC_ii} we have $\mu(\bigcap_{n=1}^\infty A_n)\leq \mu(A_k) \to 0$ as $k \to +\infty$, and therefore $\mu(\bigcap_{n=1}^\infty A_n)=0$. 
\end{remark}

\begin{remark}\label{rem:axiom_vv_remark}
In the original formulation by Bana\'s and Goebel, as well as in many later works, axiom~\ref{def:axiomatic_MNC_v} is stated without any restriction on the dimension of $E$. Note, however, that in finite-dimensional spaces the axiom is automatically satisfied.
\end{remark}

\section{Independence of the axioms}

We say that an axiom is \emph{independent} of the others if in every Banach space there exists a set function which satisfies all the axioms in Definition~\ref{def:axiomatic_MNC}, except for that particular one.

\subsection{Axiom~\ref{def:axiomatic_MNC_i}} 
Axiom~\ref{def:axiomatic_MNC_i} is independent of the others. Indeed, consider the constant set function $\nu\colon \mathcal B_E \to [0,+\infty)$ defined by $\nu(A) := 1$ for all $A\in \mathcal B_E$; it never vanishes on a non-empty bounded set, yet all other axioms are satisfied, with axiom~\ref{def:axiomatic_MNC_v} holding vacuously.

A brief comment is in order. Formally, axiom~\ref{def:axiomatic_MNC_i} is the conjunction of two conditions, which might suggest the need for separate examples addressing each condition individually. However, in view of Proposition~\ref{prop:axiom_i_remark}, a single example is sufficient.

\subsection{Axiom~\ref{def:axiomatic_MNC_ii}} 
\label{sec:axiom_ii}
To show that axiom~\ref{def:axiomatic_MNC_ii} is independent of the others let us consider the set function $\nu \colon \mathcal B_{E} \to [0,+\infty)$ given by
\[
 \nu(A):=(\diam A)^2+\inf_{a \in \conv A}\norm{a}_E \ \ \text{for $A\in \mathcal B_E$}.
\]
The kernel of $\nu$ is non-empty and included in $\mathcal K_E$, as it consists of a singleton $\{0\}$. Hence, $\nu$ satisfies axiom~\ref{def:axiomatic_MNC_i}. On the other hand, $\nu$ fails to satisfy axiom~\ref{def:axiomatic_MNC_ii}. To see this, just take the sets $A:=\{x\}$ and $B:=\{0,x\}$, where $x \in E$ is a fixed point of norm $\frac{1}{2}$. Although $A\subseteq B$, we have $\nu(A)=\frac{1}{2}$ and $\nu(B)=\frac{1}{4}$. Axiom~\ref{def:axiomatic_MNC_iii} is satisfied trivially, since $\conv(\conv A)=\conv A$ for any non-empty set, and the diameter function itself is an axiomatic measure of non-compactness. To check that $\nu$ satisfies~\ref{def:axiomatic_MNC_iv}, we use a simple identity:
\begin{equation}\label{eq:convex}
\conv(\lambda A + (1-\lambda)B) = \lambda \conv A + (1-\lambda)\conv B \ \ \text{for $A,B \in \mathcal B_E$ and $\lambda \in [0,1]$}. 
\end{equation}
Any point $c \in \conv(\lambda A + (1-\lambda)B)$ can be written as $c=\lambda a + (1-\lambda) b$ for some $a \in \conv A$ and $b \in \conv B$. Moreover, $\norm{c}_E \leq \lambda \norm{a}_E + (1-\lambda) \norm{b}_E$. Now, taking the infimum over $a$ and $b$, and using the fact that square functions are convex and non-decreasing on the non-negative half-axis, we obtain
\[
 \nu(\lambda A + (1-\lambda)B) \leq \lambda \nu(A) + (1-\lambda) \nu(B).
\]
Finally, let us consider a non-increasing sequence $(A_n)_{n \in \mathbb{N}}$ of non-empty, bounded, and closed subsets such that $\nu(A_n)\to 0$. In this case we also have $\diam A_n \to 0$. By the classical 
Cantor intersection theorem (see, for instance, \cite{BBK}*{Corollary~1.1.33} or \cite{Kuratowski}*{Th\'eor\`eme~1}), it follows that the intersection 
$\bigcap_{n=1}^\infty A_n$ is non-empty, meaning that $\nu$ satisfies axiom~\ref{def:axiomatic_MNC_v}.

\subsection{Axiom~\ref{def:axiomatic_MNC_iii}} 
In contrast with the two preceding axioms, axiom~\ref{def:axiomatic_MNC_iii} requires a more delicate analysis. While we will not be able to fully resolve the question of its independence, as we shall see, in certain situations it can in fact be derived from the remaining axioms. To this end, we introduce the following notion, motivated by Carath\'eodory’s theorem on the structure of convex hulls in finite-dimensional Euclidean spaces (see, for example,~\cite{Schneider}*{Theorem~1.1.4}). Although the connection with this classical result may not be apparent at first sight, it will become clear in due course.

\begin{definition}
We say that a non-empty subset $A$ of a Banach space has the 
\emph{weak Carath\'eodory property} if there exists $\lambda \in (0,1]$ such that $\lambda A + (1-\lambda)\conv A = \conv A$.
\end{definition}

Before considering examples of sets with the weak Carath\'eodory property, we first pause to examine the definition.

\begin{remark}\label{rem:wCp_equivalent_1}
Since for every $\lambda \in (0,1]$ we always have the inclusion
\[
 \lambda A + (1-\lambda)\conv A \subseteq \conv A,
\]
we can equivalently define the weak Carath\'eodory property as the existence
of some $\lambda \in (0,1]$ for which
\[
\conv A \subseteq \lambda A + (1-\lambda)\conv A.
\]
\end{remark}

\begin{remark}
Note also that allowing longer convex combinations in the definition of the weak
Ca\-ra\-th\'e\-o\-do\-ry property does not lead to a larger class of sets. Indeed, suppose
there exist $\lambda_1, \ldots, \lambda_m \in (0,1]$ and $\lambda_{m+1},\ldots,\lambda_n \in [0,1]$ with $\lambda_1 + \cdots + \lambda_n = 1$ such that
\[
 \conv A \subseteq \lambda_1 A + \lambda_2 A + \cdots + \lambda_m A + \lambda_{m+1}\conv A + \cdots +\lambda_n \conv A,
\]
Then,
\begin{align*}
  \conv A &\subseteq \lambda_1 A + \lambda_2 A + \cdots + \lambda_m A + \lambda_{m+1}\conv A + \ldots +\lambda_n \conv A\\
	& \subseteq \lambda_1 A + \lambda_2 \conv A + \ldots +\lambda_n \conv A\\
	& = \lambda_1 A + (\lambda_2 + \cdots +\lambda_n) \conv A,
\end{align*}
which shows that $A$ has the weak Carath\'eodory property.

Conversely, if $A$ has the weak Carath\'eodory property, then
\begin{align*}
 \conv A &\subseteq \lambda A + (1-\lambda)\conv A\\
& \subseteq \lambda A + (1-\lambda)\lambda A + (1-\lambda)^2 \conv A\\
& \subseteq \lambda A + (1-\lambda)\lambda \conv A + (1-\lambda)^2 \conv A,
\end{align*}
and, inductively,
\[
 \conv A\subseteq \lambda A + (1-\lambda)\lambda \conv A + \cdots + (1-\lambda)^{n-1}\lambda \conv A +(1-\lambda)^n \conv A.
\]
\end{remark}

With these remarks in place, we now proceed to a proposition that offers an alternative but equivalent formulation of the definition of the weak Carath\'eodory property.

\begin{proposition}\label{prop:wCp_equivalent}
A non-empty subset $A$ of a Banach space has the weak Carath\'eodory property with parameter $\lambda \in (0,1]$ if and only if every element of $\conv A$ has a representation 
\[
 \sum_{i=1}^n \lambda_i a_i, \quad a_i \in A,\ \lambda_i \geq 0,\ \sum_{i=1}^n \lambda_i=1
\]
such that $\lambda_j \geq \lambda$ for some index $j \in \{1,\ldots,n\}$.
\end{proposition}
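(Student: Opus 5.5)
We prove both implications directly from the definition, using the reformulation in Remark~\ref{rem:wCp_equivalent_1}: the weak Carath\'eodory property with parameter $\lambda$ is equivalent to the inclusion $\conv A \subseteq \lambda A + (1-\lambda)\conv A$.

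\emph{Necessity.} Assume the property holds with parameter $\lambda$, and let $x \in \conv A$. By the inclusion above we can write $x = \lambda a + (1-\lambda) y$ with $a \in A$ and $y \in \conv A$. Write $y = \sum_{i=1}^m \mu_i b_i$ with $b_i \in A$, $\mu_i \geq 0$ and $\sum_{i=1}^m \mu_i = 1$. Then
\[
 x = \lambda a + \sum_{i=1}^m (1-\lambda)\mu_i b_i .
\]
This is a convex combination of points of $A$: the coefficients are non-negative and sum to $\lambda + (1-\lambda) = 1$. The coefficient of $a$ equals $\lambda$, so the required index $j$ exists. We do not merge $a$ with any $b_i$ that happens to coincide with it, since repeated points are permitted in the representation. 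If $\lambda = 1$, the term involving $y$ simply vanishes.

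\emph{Sufficiency.} Assume every $x \in \conv A$ has a representation $x = \sum_{i=1}^n \lambda_i a_i$ with $\lambda_j \geq \lambda$ for some $j$. We must show $x \in \lambda A + (1-\lambda)\conv A$, and we split into two cases.
\begin{itemize}
 \item If $\lambda = 1$, then $\lambda_j = 1$ and all other coefficients vanish. Hence $x = a_j \in A = 1\cdot A + 0\cdot\conv A$.
 \item If $\lambda < 1$, we split the $j$-th term as $\lambda_j a_j = \lambda a_j + (\lambda_j - \lambda) a_j$. This gives
 \[
  x = \lambda a_j + (1-\lambda)\, y, \qquad y := \frac{1}{1-\lambda}\Bigl( (\lambda_j-\lambda) a_j + \sum_{i \neq j} \lambda_i a_i \Bigr).
 \]
 The coefficients in the bracket are non-negative, because $\lambda_j \geq \lambda$. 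They sum to $1-\lambda$, so $y$ is a convex combination of points of $A$, that is, $y \in \conv A$.
\end{itemize}
In both cases $x \in \lambda A + (1-\lambda)\conv A$. Therefore $\conv A \subseteq \lambda A + (1-\lambda)\conv A$, and Remark~\ref{rem:wCp_equivalent_1} finishes the proof.

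There is no real obstacle here. The only care needed is in the degenerate case $\lambda = 1$, where we avoid dividing by $1-\lambda$ and handle it separately as above. We also rely on the standard description of $\conv A$ as the set of all finite convex combinations of elements of $A$.
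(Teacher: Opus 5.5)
Your proof is correct and follows the paper's argument essentially verbatim. For necessity, you expand $y\in\conv A$ as a convex combination and read off the coefficient $\lambda$; for sufficiency, you split $\lambda_j a_j=\lambda a_j+(\lambda_j-\lambda)a_j$ and renormalize by $1-\lambda$. Your separate treatment of $\lambda=1$ is a small improvement, since the paper's proof divides by $1-\lambda$ without addressing that case.
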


\begin{proof}
First, assume that $A$ has the weak Carath\'eodory property with parameter $\lambda$, and take any $x \in \conv A$. Then, there exist $y \in A$ and $z \in \conv A$ such that $x=\lambda y + (1-\lambda)z$. Moreover, we can represent $z$ in the form
\[
 z= \sum_{i=1}^n \kappa_i a_i,
\] 
where $a_i \in A$, $\kappa_i \geq 0$ and $\sum_{i=1}^n \kappa_i=1$. Hence,
\[
 x = \lambda y + \sum_{i=1}^n (1-\lambda)\kappa_i a_i.
\]
And in this convex combination the first coefficient is exactly $\lambda$.

Now, to prove the opposite implication let us take any $x \in \conv A$ and assume that we can write it as
\[
 x=\sum_{i=1}^n \lambda_i a_i
\]
for some $a_i \in A$, $\lambda_i \geq 0$, $\sum_{i=1}^n \lambda_i=1$ with $\lambda_1 \geq \lambda$. Then,
\[
 x=\lambda a_1 + (1-\lambda)\Biggl( \frac{\lambda_1-\lambda}{1-\lambda}a_1 + \sum_{i=2}^n \frac{\lambda_i}{1-\lambda} a_i\Biggr).
\]
Since the latter element belongs to $\conv A$, we obtain $\conv A \subseteq \lambda A + (1-\lambda)\conv A$. This proves that $A$ has the weak Carath\'eodory property with parameter $\lambda$.
\end{proof}

An immediate consequence of Proposition~\ref{prop:wCp_equivalent} is the following corollary.

\begin{corollary}\label{cor:weak_c_property_decreasing}
If a non-empty set has the weak Carath\'eodory property for some parameter $\lambda \in (0,1]$, then it also enjoys this property for every $\kappa \in (0,\lambda]$.
\end{corollary}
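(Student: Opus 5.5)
The plan is to read the corollary directly off the characterization in Proposition~\ref{prop:wCp_equivalent}. Suppose $A$ has the weak Carath\'eodory property with parameter $\lambda \in (0,1]$, and fix $\kappa \in (0,\lambda]$. First I apply the ``only if'' direction of Proposition~\ref{prop:wCp_equivalent}. It says that every $x \in \conv A$ admits a representation
\[
 x=\sum_{i=1}^n \lambda_i a_i, \quad a_i \in A,\ \lambda_i \geq 0,\ \sum_{i=1}^n \lambda_i=1,
\]
in which $\lambda_j \geq \lambda$ for some index $j$.

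Since $\lambda \geq \kappa$, the same representation gives $\lambda_j \geq \kappa$. So every element of $\conv A$ has a representation of the kind required in Proposition~\ref{prop:wCp_equivalent} with parameter $\kappa$. Applying the ``if'' direction of that proposition, now with parameter $\kappa \in (0,1]$, shows that $A$ has the weak Carath\'eodory property with parameter $\kappa$.

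No step here is really an obstacle. The only point to check is that Proposition~\ref{prop:wCp_equivalent} is stated for an arbitrary parameter in $(0,1]$, so it may be used with $\kappa$ in place of $\lambda$. This holds because $0<\kappa\leq\lambda\leq 1$.

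A direct alternative would be to show $\lambda A+(1-\lambda)\conv A\subseteq \kappa A+(1-\kappa)\conv A$. For $a\in A$ and $z\in\conv A$ one rewrites
\[
 \lambda a+(1-\lambda)z=\kappa a+(1-\kappa)\Bigl(\tfrac{\lambda-\kappa}{1-\kappa}a+\tfrac{1-\lambda}{1-\kappa}z\Bigr)
\]
when $\kappa<1$; the case $\kappa=1$ forces $\lambda=1$ and is trivial. Combining this inclusion with Remark~\ref{rem:wCp_equivalent_1} gives the same conclusion. This is essentially the computation already carried out in the proof of Proposition~\ref{prop:wCp_equivalent}, so I would use the first argument.
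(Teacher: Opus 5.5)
Your proposal is correct and matches the paper's approach: the paper presents the corollary as an immediate consequence of Proposition~\ref{prop:wCp_equivalent}, exactly as in your first argument, where a representation with some coefficient $\lambda_j \geq \lambda$ automatically has $\lambda_j \geq \kappa$. Your alternative direct inclusion $\lambda A+(1-\lambda)\conv A\subseteq \kappa A+(1-\kappa)\conv A$ is also valid, but it is not needed.
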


Clearly, every convex set has the weak Carath\'eodory property. As we will see shortly, every finite-dimensional set -- that is, any set contained in a finite-dimensional subspace of $E$ -- also has the weak Carath\'eodory property (see Proposition~\ref{prop:Cp_implies_wCp}). A method for constructing new sets with the weak Carath\'eodory property from given ones is presented in the following proposition.

\begin{proposition}\label{prop:weak_caratheodory_properties}
If non-empty subsets $A,B$ of the Banach space have the weak Carath\'eodory property, then so do $A \cup B$ and $A + B$.
\end{proposition}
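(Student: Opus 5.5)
We prove both claims with the characterisation in Proposition~\ref{prop:wCp_equivalent} and the inclusion form of Remark~\ref{rem:wCp_equivalent_1}. Let $A$ have the weak Carath\'eodory property with parameter $\alpha$ and $B$ with parameter $\beta$. By Corollary~\ref{cor:weak_c_property_decreasing} we may replace both by $\lambda := \min\{\alpha,\beta\}$, so both sets satisfy $\conv A \subseteq \lambda A + (1-\lambda)\conv A$ and $\conv B \subseteq \lambda B + (1-\lambda)\conv B$.

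\emph{Union.} Recall the standard description
\[
\conv(A\cup B)=\dset{t a + (1-t) b}{a\in\conv A,\ b\in \conv B,\ t\in[0,1]}.
\]
Take $x = t a + (1-t)b$ of this form. Without loss of generality $t \geq \tfrac12$. Write $a = \lambda a' + (1-\lambda)a''$ with $a' \in A$ and $a'' \in \conv A$. Then
\[
x = t\lambda a' + t(1-\lambda)a'' + (1-t)b .
\]
Expanding $a''$ and $b$ as finite convex combinations of points of $A\cup B$, we get a representation of $x$ in which $a'$ carries weight $t\lambda \geq \lambda/2$. Proposition~\ref{prop:wCp_equivalent} then gives the weak Carath\'eodory property for $A\cup B$ with parameter $\lambda/2$.

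\emph{Sum.} First we use the identity $\conv(A+B) = \conv A + \conv B$. This follows from \eqref{eq:convex} applied with $\lambda = \tfrac12$ to $2A$ and $2B$, or directly by expanding products of convex combinations. Take $x = a + b$ with $a \in \conv A$ and $b \in \conv B$. Write
\[
a = \lambda a' + (1-\lambda)a'', \qquad b = \lambda b' + (1-\lambda)b'',
\]
with $a'\in A$, $a''\in\conv A$, $b'\in B$, $b''\in\conv B$. Then
\[
x = \lambda(a'+b') + (1-\lambda)(a''+b'') \in \lambda(A+B) + (1-\lambda)\conv(A+B).
\]
Remark~\ref{rem:wCp_equivalent_1} now gives the property with the same parameter $\lambda$.

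The only genuinely non-trivial ingredients are the two convex-hull identities. For the union, the inclusion ``$\supseteq$'' is clear. For ``$\subseteq$'', split a convex combination of points of $A\cup B$ into the part using points of $A$ and the part using points of $B$, and normalise each part. The degenerate cases where one part has zero total weight correspond to $t\in\{0,1\}$. For the sum, we must check $\conv A+\conv B\subseteq\conv(A+B)$. Given $\sum_i s_i a_i$ and $\sum_j r_j b_j$, their sum equals $\sum_{i,j}s_ir_j(a_i+b_j)$. I expect the main (minor) obstacle to be the loss of the parameter in the union case: the factor $\tfrac12$ is needed because we do not know which of $A$ or $B$ dominates the combination.
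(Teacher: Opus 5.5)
Your proof is correct and follows the paper's argument. The sum case is identical. For the union, your pointwise splitting $x=ta+(1-t)b$ with $t\ge\tfrac12$ is the element-wise form of the paper's set-level identity $\tfrac12(\conv A\cup\conv B)+\tfrac12\conv(\conv A\cup\conv B)=\conv(\conv A\cup\conv B)$, and it yields the same parameter $\lambda/2$; you also justify the description of $\conv(A\cup B)$ that the paper leaves unproved at this point.
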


\begin{proof}
Assume that the sets $A$ and $B$ have the weak Carath\'eodory property. Then, by Remark~\ref{rem:wCp_equivalent_1} and Corollary~\ref{cor:weak_c_property_decreasing}, there exists $\lambda \in (0,1]$ such that
\[
\conv A \subseteq \lambda A + (1-\lambda)\conv A
\quad \text{and} \quad
\conv B \subseteq \lambda B + (1-\lambda)\conv B.
\]
Consequently,
\begin{align*}
 \conv(A+B) &= \conv A + \conv B \\
 &\subseteq \lambda A + (1-\lambda)\conv A + \lambda B + (1-\lambda)\conv B \\
 &= \lambda(A+B) + (1-\lambda)\conv(A+B),
\end{align*}
which shows that $A+B$ has the weak Carath\'eodory property.

We now turn to the case of the union $A \cup B$.  Since the sets $\conv A$ and $\conv B$ are convex, it can be shown that 
\[
 \tfrac{1}{2}\bigl(\conv A \cup \conv B\bigr) + \tfrac{1}{2}\conv\bigl(\conv A \cup \conv B\bigr)=\conv\bigl(\conv A \cup \conv B\bigr)
\]
(we omit the proof here, as a more general result will be established in Proposition~\ref{prop:caratheodory_properties}). Note also that
\[
 \conv A \cup \conv B \subseteq \lambda(A \cup B) + (1-\lambda) \conv(A\cup B) \subseteq \conv\bigl(\conv A \cup \conv B\bigr)
\]
and
\begin{align*}
& \conv\bigl( \lambda(A \cup B) + (1-\lambda) \conv(A\cup B)\bigr)\\
&\qquad = \lambda \conv (A\cup B) + (1-\lambda)\conv (A\cup B)=\conv (A\cup B).
\end{align*}
Therefore,
\begin{align*}
  &\conv(A \cup B)\\
	   &\quad = \conv\bigl(\conv A \cup \conv B\bigr)\\
	   &\quad \subseteq \tfrac{1}{2}\lambda(A \cup B) + \tfrac{1}{2}(1-\lambda) \conv(A\cup B)+\tfrac{1}{2}\conv\bigl( \lambda(A \cup B) + (1-\lambda) \conv(A\cup B)\bigr)\\
		&\quad=\tfrac{1}{2}\lambda(A \cup B) + \tfrac{1}{2}(1-\lambda) \conv(A\cup B)+\tfrac{1}{2}\conv(A\cup B)\\
		&\quad =\tfrac{1}{2}\lambda(A \cup B) + (1-\tfrac{1}{2}\lambda) \conv(A\cup B)
\end{align*}
Hence, the union $A\cup B$ has the weak Carath\'eodory property.
\end{proof}

We postpone the question of whether the weak Carath\'eodory property is preserved under intersections or under the Minkowski difference of sets to Remark~\ref{rem:minkowski_difference_intersection}.

\begin{theorem}\label{thm:no_weak_C}
In every infinite-dimensional Banach space there exists a non-empty and bounded set that does not have the weak Carath\'eodory property. 
\end{theorem}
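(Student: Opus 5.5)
The plan is to take the simplest possible non-convex candidate: a bounded sequence of linearly independent unit vectors. We then show, via the equivalent reformulation in Proposition~\ref{prop:wCp_equivalent}, that no parameter $\lambda\in(0,1]$ can work. Since $E$ is infinite-dimensional, we can choose a sequence $(x_n)_{n\in\mathbb N}$ of linearly independent vectors and normalize them so that $\norm{x_n}_E=1$. We set $A:=\dset{x_n}{n\in\mathbb N}$. Then $A$ is non-empty and bounded, since $A\subseteq \ball_E(0,1)$.

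Suppose, for a contradiction, that $A$ has the weak Carath\'eodory property with some parameter $\lambda\in(0,1]$. Fix $N\in\mathbb N$ with $N>1/\lambda$ and consider the barycenter
\[
 x:=\frac1N\sum_{i=1}^N x_i\in\conv A .
\]
By Proposition~\ref{prop:wCp_equivalent}, $x$ admits a representation $x=\sum_{j=1}^m \lambda_j a_j$ with $a_j\in A$, $\lambda_j\ge 0$, $\sum_j\lambda_j=1$ and $\lambda_{j_0}\ge\lambda$ for some index $j_0$.

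The key step is to merge repeated points. Each $a_j$ equals some $x_{n_j}$, and several indices $j$ may correspond to the same $n$. For every $n$ we put $\mu_n:=\sum_{j:\,n_j=n}\lambda_j$, so that $x=\sum_n \mu_n x_n$ is a finite linear combination of distinct vectors $x_n$. Comparing with $x=\frac1N\sum_{i\le N}x_i$, linear independence of $(x_n)$ forces
\[
 \mu_n=\tfrac1N \ \text{ for } n\le N, \qquad \mu_n=0 \ \text{ for } n>N .
\]
Because all $\lambda_j$ are non-negative, we get $\lambda_j\le \mu_{n_j}\le \frac1N<\lambda$ for every $j$. This contradicts $\lambda_{j_0}\ge\lambda$, so $A$ fails the weak Carath\'eodory property for every $\lambda\in(0,1]$.

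I expect no serious obstacle. The only delicate point is the bookkeeping above: the representation in Proposition~\ref{prop:wCp_equivalent} allows repetitions among the $a_i$, so one must aggregate coefficients before invoking uniqueness of linear combinations, and then use non-negativity to pass the bound back to the individual $\lambda_j$. Only linear independence is needed, not a Schauder basic sequence, and such a sequence exists trivially in any infinite-dimensional space.
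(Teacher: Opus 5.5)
Your proof is correct. It follows the same blueprint as the paper: a set of unit vectors, the barycenter of the first $N$ of them, and the reformulation in Proposition~\ref{prop:wCp_equivalent}. It differs in what guarantees uniqueness of coefficients. The paper invokes Mazur's theorem to get a closed subspace with a Schauder basis of unit vectors and then appeals to uniqueness of Schauder expansions. You observe that $\conv A$ consists only of finite convex combinations, so plain linear independence already suffices, and such a sequence exists in any infinite-dimensional space with no structure theory at all. Your version is therefore more elementary and self-contained. It also treats two points more carefully than the paper. First, you aggregate repeated points explicitly and use non-negativity to pass the bound back to the individual coefficients; the paper's phrase ``the only representation'' passes over this silently, although the conclusion there is unaffected. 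Second, you fix $\lambda$ first and choose $N>1/\lambda$, which avoids the appeal to Corollary~\ref{cor:weak_c_property_decreasing}. The basic-sequence device gains nothing for this particular statement. It only becomes essential later, in the proof of Theorem~\ref{thm:axiom_v_failure}, where the closedness of the tails $\{x_n,x_{n+1},\ldots\}$ relies on the continuity of the coefficient functionals.
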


\begin{proof}
It is well-known that every infinite-dimensional Banach space $E$ contains a closed infinite-dimensional Banach subspace $F$ which admits a Schauder basis $(e_n)_{n \in \mathbb N}$ consisting of unit vectors (see, for example,~\cite{Diestel1984}*{Corollary~3, p.~39}). 

Now, define $A:=\dset{e_n \in F}{n \in \mathbb N}$. It is clear that $A$ is bounded in $E$. Moreover, its convex hull is contained entirely in $F$. Fix an arbitrary positive integer $m$, and consider
\[
 x:=\frac{1}{2m}\sum_{n=1}^{2m} e_n \in \conv A.
\]
Since $x\in F$, the uniqueness of the Schauder expansion implies that the expression above is the only representation of $x$ as a convex combination of elements of $A$. Consequently, $x$ admits no convex representation in which at least one coefficient is greater than or equal to $\frac{1}{m}$. As $m$ was arbitrary, it follows from Proposition~\ref{prop:wCp_equivalent} and Corollary~\ref{cor:weak_c_property_decreasing} that $A$ does not have the weak Carath\'eodory property.
\end{proof}

\begin{remark}
The assumption that the Banach space $E$ is infinite-dimensional is essential. Indeed, it will follow from Proposition~\ref{prop:Cp_implies_wCp} that every subset of a finite-dimensional space has the weak Carath\'eodory property.
\end{remark}

\begin{remark}\label{rem:no_weak_C}
From the proof of Theorem~\ref{thm:no_weak_C}, it follows that the set $A:=\dset{e_n}{n\in\mathbb N}$, where $e_n$ denotes the sequence whose $n$-th coordinate is $1$ and all remaining coordinates are $0$, considered as a subset of the sequence Banach spaces $c$, $c_0$ or $l^p$ for $p\in[1,+\infty]$, does not have the weak Carath\'eodory property.

The same argument also shows that the set $A$ enlarged by $\{0\}$ also does not have the weak Carath\'eodory property.
\end{remark}

So far, we have been concerned with the weak Carath\'eodory property. We now introduce a stronger notion, which explains the terminology adopted above.

\begin{definition}\label{def:caratheodory}
We say that a non-empty subset $A$ of a Banach space has the 
\emph{Carath\'eodory property} if there exists a positive integer $m$ 
such that the convex hull of $A$ consists precisely of all convex 
combinations of at most (not necessarily distinct) $m$ points of $A$.
\end{definition} 

\begin{remark}
Note that in Definition~\ref{def:caratheodory} we could equivalently require all convex combinations to use exactly $m$ points by including extra elements with zero coefficients.
\end{remark}

As the name suggests, the Carath\'eodory property is stronger than its weak counterpart.

\begin{proposition}\label{prop:Cp_implies_wCp}
Every set that has the Carath\'eodory property \textup(in particular, every convex and every finite-dimensional set\textup) also enjoys the weak Carath\'eodory property.
\end{proposition}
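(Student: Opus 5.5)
The plan is to verify the criterion of Proposition~\ref{prop:wCp_equivalent} with the explicit parameter $\lambda := \frac{1}{m}$, where $m$ is the integer from Definition~\ref{def:caratheodory}.

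First, suppose $A$ has the Carath\'eodory property with integer $m$. Take any $x \in \conv A$. By assumption, we can write $x=\sum_{i=1}^m \lambda_i a_i$ with $a_i\in A$, $\lambda_i\ge 0$ and $\sum_{i=1}^m\lambda_i=1$; we pad with zero coefficients if fewer than $m$ points are used. Since $m$ non-negative numbers summing to $1$ cannot all be smaller than $\frac1m$, some index $j$ satisfies $\lambda_j\ge\frac1m$. Proposition~\ref{prop:wCp_equivalent} then gives the weak Carath\'eodory property with parameter $\frac1m\in(0,1]$. This is the whole core argument, and it is essentially a pigeonhole step.

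Next, I will check the two parenthetical cases.
\begin{itemize}
\item If $A$ is convex, then $\conv A = A$, so every point of $\conv A$ is a trivial convex combination of one point of $A$. Hence $m=1$ works; alternatively, $A = 1\cdot A + 0\cdot\conv A$ directly.
\item If $A$ is finite-dimensional, I will argue as follows. $A$ lies in a subspace $F$ of dimension $d$, real dimension at most $2d$ in the complex case. Convex hulls are taken with real coefficients, so the classical Carath\'eodory theorem applies in $F$ viewed as a real vector space $\mathbb R^{k}$ with $k\le 2d$. Every point of $\conv A$ is therefore a convex combination of at most $k+1$ points of $A$, so $A$ has the Carath\'eodory property with $m=k+1$.
\end{itemize}

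The only point that requires some care is this finite-dimensional case. The issue is that Carath\'eodory's theorem is stated in Euclidean space, while convexity is a purely real-affine notion. So the step I expect to need justification is identifying $F$ with $\mathbb R^k$ via a real linear isomorphism. Such a map preserves convex combinations, so the Euclidean theorem transfers unchanged. Beyond this, no metric or completeness considerations enter.
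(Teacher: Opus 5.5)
Your proposal is correct and follows the paper's proof exactly: write each $x\in\conv A$ as a convex combination of $m$ points, observe by pigeonhole that some coefficient is at least $\frac{1}{m}$, and invoke Proposition~\ref{prop:wCp_equivalent} with parameter $\frac{1}{m}$. Your explicit check of the parenthetical cases is a careful addition that the paper leaves implicit in its appeal to the classical Carath\'eodory theorem, notably applying that theorem in the underlying real space of dimension at most $2d$ in the complex case.
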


\begin{proof}
Let us fix a non-empty set $A$ whose convex hull consists precisely of all convex combinations of exactly $m$ elements. Then, for any $x\in \conv A$ there exist points $a_1, \ldots,a_m \in A$ and coefficients $\lambda_1,\ldots,\lambda_m \in [0,1]$ with $\sum_{k=1}^{m}\lambda_k = 1$ such that
\[
x = \sum_{k=1}^{m} \lambda_k a_k.
\]
If all coefficients $\lambda_k$ were strictly less than $\frac{1}{m}$, their sum could not equal $1$. Hence, at least one coefficient must be greater than or equal to $\frac{1}{m}$.
By Proposition~\ref{prop:wCp_equivalent}, this shows that $A$ has the weak Carath\'eodory property with parameter $\lambda=\frac{1}{m}$.
\end{proof}

Now, we present an example of a set that has the weak Carath\'eodory
property but does not satisfy the Carath\'eodory property.

\begin{example}\label{ex:wCp_not_Cp}
Consider the Banach space $c_0$ of real null sequences endowed with the supremum norm, and its subset $A$ consisting of all finite sums of distinct unit vectors, including the empty sum which yields the zero sequence. 

We begin by showing that $\conv A$ coincides with the set $B$ of all sequences $(\xi_k)_{k \in \mathbb{N}}$ having only finitely many non-zero terms and satisfying $\xi_k \in [0,1]$ for every $k \in \mathbb N$. The inclusion $\conv A \subseteq B$ is clear. We prove the reverse inclusion by induction on the number of non-zero terms.

Let $x = (\xi_k)_{k \in \mathbb N} \in B$. If $x$ has at most one non-zero
term, then clearly $x \in \conv A$. Now, assume that every sequence in $B$ with at most $n$ non-zero terms belongs to $\conv A$, and let $x \in B$ have $n+1$ non-zero terms. For simplicity, assume these occur in the first $n+1$ coordinates. Set $\lambda:=\min_{1\leq k \leq n+1} \xi_k$. If $\lambda=1$, then 
\[
 x=\sum_{k=1}^{n+1} e_k \in A \subseteq \conv A.
\]
On the other hand, if $\lambda < 1$, we may write
\[
 x = \sum_{k=1}^{n+1} \xi_k e_k = \lambda \sum_{k=1}^{n+1} e_k + (1-\lambda) \sum_{k=1}^{n+1} \frac{\xi_k-\lambda}{1-\lambda} e_k. 
\] 
The first sum on the right-hand side of this decomposition belongs to $A$. The second sum belongs to $\conv A$ by the inductive assumption, since it involves at most $n$ non-zero terms and all coefficients lie in $[0,1]$. It follows that $x \in \conv A$, and hence $\conv A = B$.

Now, we prove that $A$ has the weak Carath\'eodory property. Let $x \in \conv A$. Then, there exist $n \in \mathbb N$ and $\xi_k \in [0,1]$ for $k=1,\ldots,n$ such that
\[
x = \sum_{k=1}^n \xi_k e_k.
\]
Set $I:=\dset{k \in \mathbb N}{\xi_k > \frac{1}{2}}$, and express $x$ as
\[
 x = \frac{1}{2}\Biggl( \sum_{k \in I} (2\xi_k - 1)e_k + \sum_{k \notin I} 2\xi_k e_k\Biggr) + \frac{1}{2} \sum_{k \in I} e_k \in \frac{1}{2}\conv A + \frac{1}{2}A;
\] 
if the set $I$ is empty, we take the corresponding sum to be zero. This shows that $\conv A \subseteq \frac{1}{2}A + \frac{1}{2} \conv A$.

It remains to show that $A$ does not have the Carath\'eodory property. Fix $n \in \mathbb{N}$ and consider
\[
x: = \sum_{k=1}^{2^n} \frac{k}{2^n} e_k \in \conv A.
\]
Suppose that $x$ can be written as a convex combination of at most $n$ elements of $A$, that is,
\[
x = \sum_{j=1}^n \lambda_j a_j,
\]
for some $a_j \in A \setminus \{0\}$ and $\lambda_j \geq 0$ with $0 < \sum_{j=1}^n \lambda_j \leq 1$. Each $a_j$ is a finite sum of distinct unit vectors, so choosing $m \in \mathbb N$ large enough, we may write all of them in the form
\[
 a_j = \sum_{k=1}^m \beta_k^j e_k, \quad j=1,\ldots,n,
\]
where $\beta_k^j \in \{0,1\}$. Substituting this into the expression for $x$, we obtain
\[
 x=\sum_{j=1}^n \lambda_j a_j = \sum_{j=1}^n \lambda_j \Biggl(\sum_{k=1}^m \beta_k^j e_k\Biggr) = \sum_{k=1}^m \Biggl( \sum_{j=1}^n \lambda_j \beta_k^j\Biggr) e_k.
\]
Comparing coefficients with the definition of $x$, we see that necessarily
$m \geq 2^n$ and
\[
 \sum_{j=1}^n \lambda_j \beta_k^j = \frac{k}{2^n} \ \text{for $k=1,\ldots,2^n$.}
\]
In particular, for every $k \in \{1,\ldots,2^n\}$ at least one of the coefficients $\beta_k^1, \ldots, \beta_k^n$ must be non-zero. Since there are only $2^n - 1$ non-zero binary vectors of length $n$, the pigeonhole principle implies that at least two of the $2^n$ vectors $(\beta_k^1, \ldots, \beta_k^n)$ must coincide. But then the corresponding left-hand sides of the above system are equal, while the right-hand sides are distinct -- a contradiction.

Thus, representing $x$ as a convex combination of elements of $A$ requires at least $n+1$ terms. This implies that $A$ does not have the Carath\'eodory property.
\end{example}

To conclude our general study of the Carath\'eodory property, let us present a result similar in spirit to Proposition~\ref{prop:weak_caratheodory_properties}.

\begin{proposition}\label{prop:caratheodory_properties}
If non-empty subsets $A,B$ of a Banach space have the Carath\'eodory property, then so do $A \cup B$ and $A + B$.
\end{proposition}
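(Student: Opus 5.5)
We treat the sum and the union separately. Assume $\conv A$ consists exactly of convex combinations of $m$ points of $A$, and $\conv B$ of $k$ points of $B$. Adding zero coefficients, we may take $m=k$.

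\textbf{The sum.} We use the identity $\conv(A+B)=\conv A+\conv B$, already used in the proof of Proposition~\ref{prop:weak_caratheodory_properties}. Take $x=\sum_{i=1}^m\alpha_i a_i+\sum_{j=1}^m\beta_j b_j$ with $a_i\in A$, $b_j\in B$. We want to write $x$ as a convex combination of at most $2m-1$ points of $A+B$. Use the standard ``merging of two probability vectors'' (north-west corner) argument:
\begin{itemize}
\item Order the partial sums of $(\alpha_i)$ and $(\beta_j)$ together on $[0,1]$.
\item This gives at most $2m-1$ subintervals.
\item On each subinterval the pair of current indices $(i,j)$ is constant.
\end{itemize}
Letting $\gamma_{ij}\ge0$ be the lengths, we obtain a matrix with row sums $\alpha_i$, column sums $\beta_j$, and at most $2m-1$ non-zero entries. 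Then
\[
x=\sum_{i,j}\gamma_{ij}(a_i+b_j),
\]
so $A+B$ has the Carath\'eodory property with $2m-1$ (or crudely $m^2$, using the product coupling $\gamma_{ij}=\alpha_i\beta_j$; either suffices).

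\textbf{The union.} Every point of $\conv(A\cup B)$ is a convex combination of finitely many points of $A$ and of $B$. Grouping these terms gives
\[
x=t\,y+(1-t)\,z,\qquad y\in\conv A,\ z\in\conv B,\ t\in[0,1];
\]
the cases $t\in\{0,1\}$, where one group is empty, are trivial. Expanding $y$ in $m$ points of $A$ and $z$ in $m$ points of $B$ gives a convex combination of at most $2m$ points of $A\cup B$. So $A\cup B$ has the Carath\'eodory property with constant $2m$.

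This also supplies the identity deferred in the proof of Proposition~\ref{prop:weak_caratheodory_properties}. For convex $C,D$, every point of $\conv(C\cup D)$ is $tc+(1-t)d$. If $t\ge\tfrac12$, write
\[
x=\tfrac12 c+\tfrac12\bigl((2t-1)c+(2-2t)d\bigr),
\]
and symmetrically if $t<\tfrac12$. This shows $\conv(C\cup D)=\tfrac12(C\cup D)+\tfrac12\conv(C\cup D)$, the reverse inclusion being trivial.

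The only genuinely non-trivial step is the sum case, namely the identity $\conv(A+B)=\conv A+\conv B$ together with the bookkeeping of the coupling. I would verify the identity explicitly. The inclusion ``$\supseteq$'' follows by writing $\sum\alpha_ia_i+\sum\beta_jb_j=\sum_{i,j}\alpha_i\beta_j(a_i+b_j)$. The inclusion ``$\subseteq$'' holds because a convex combination of the points $a_k+b_k$ splits as $\sum\lambda_ka_k+\sum\lambda_kb_k$. Given this, the product coupling already yields the bound $m^2$ without any subtlety.
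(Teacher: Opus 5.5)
Your proof is correct and follows essentially the paper's route. For the sum you use the identity $\conv(A+B)=\conv A+\conv B$ with the product coupling $\alpha_i\beta_j$, which gives the constant $m\cdot n$. For the union you group a convex combination into a part lying in $\conv A$ and a part lying in $\conv B$, which gives $m+n$. Your north-west-corner coupling is a valid sharpening that the paper does not make: it lowers the constant for $A+B$ to $2m-1$ (in general $m+n-1$). Your direct verification of the identity deferred in Proposition~\ref{prop:weak_caratheodory_properties} is also correct.
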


\begin{proof}
Assume that any element of $\conv A$ and $\conv B$ can be expressed as a convex combination 
of exactly (not necessarily distinct) $m$ and $n$ points, respectively.

Since $\conv(A+B)=\conv A + \conv B$, for every $x \in \conv(A+B)$ we can write
\[
 x = \sum_{k=1}^m \alpha_k a_k + \sum_{l=1}^n \beta_l b_l,
\] 
for some $a_1,\dots,a_m \in A$, $b_1,\dots,b_n \in B$, and $\alpha_1,\dots,\alpha_m, \beta_1,\dots,\beta_n \in [0,1]$ with  $\sum_{k=1}^m \alpha_k = \sum_{l=1}^n \beta_l = 1$. It follows that
\[
 x = \sum_{k=1}^m \sum_{l=1}^n \alpha_k \beta_l \,(a_k + b_l),
\]
and $\sum_{k=1}^m \sum_{l=1}^n \alpha_k \beta_l = 1$, which shows that $A+B$ also satisfies the Carath\'eodory property with $m \cdot n$.

Now, let us show that the same holds for $A \cup B$ with the parameter $m+n$. Take any $x \in \conv(A \cup B)$. Then,
\[
 x = \sum_{k=1}^p \alpha_k y_k,
\]
where $y_1,\dots,y_p \in A \cup B$ and $\alpha_1,\dots,\alpha_p \in [0,1]$ satisfy 
$\sum_{k=1}^p \alpha_k = 1$. Let $I$ denote the set of indices $k$ such that $y_k \in A$, 
and set $J := \{1,\dots,p\} \setminus I$. If both $I$ and $J$ are non-empty, we can write
\[
 x = \sum_{l \in I} \alpha_l \Biggl( \sum_{k \in I} \frac{\alpha_k}{\sum_{l \in I} \alpha_l} y_k \Biggr)
   + \sum_{l \in J} \alpha_l \Biggl( \sum_{k \in J} \frac{\alpha_k}{\sum_{l \in J} \alpha_l} y_k\Biggr).
\]
Since
\[
 \sum_{k \in I} \frac{\alpha_k}{\sum_{l \in I} \alpha_l} y_k \in \conv A
 \quad \text{and} \quad
 \sum_{k \in J} \frac{\alpha_k}{\sum_{l \in J} \alpha_l} y_k \in \conv B,
\]
it follows that $x$ can be expressed as a convex combination of $m+n$ points of $A \cup B$. Note that if either $I$ or $J$ is empty, the claim is immediate. Thus, $A \cup B$ has the Carath\'eodory property.
\end{proof}

\begin{remark}
The (weak) Carath\'eodory property is not necessarily inherited by subsets. For instance, the closed unit ball in the Banach space $c_0$ of null real sequences is convex and trivially has the Carath\'eodory property, but its subset $A:=\dset{e_n \in c_0}{n \in \mathbb N}$ does not (see Remark~\ref{rem:no_weak_C}).
\end{remark}

\begin{remark}\label{rem:minkowski_difference_intersection}
Similarly, the (weak) Carath\'eodory property is generally not preserved under intersections, nor under the Minkowski difference of sets. For completeness, recall that for two subsets $A$ and $B$ of a Banach space $E$, their Minkowski difference $A \dotminus B$ is defined as $\dset{x \in E}{x+B\subseteq A}$. It can be shown that, equivalently, $A \dotminus B= \bigcap_{b \in B} (A - b)$ (see, for example,~\cite{Schneider}*{p.~146}).

Consider the Banach space $c_0$ of real null sequences endowed with the supremum norm, and its subset $A:=\dset{e_n \in c_0}{n \in \mathbb N} \cup \{0\}$, where $e_n$ denotes the $n$-th unit vector. Next, set $B:=\ball_{c_0}(3e_1,1) \cup \ball_{c_0}(-5e_1,1) \cup A$.

We claim that $B$ enjoys the Carath\'eodory property. To see why, consider the set
\[
 C=\dset[\big]{(\xi_k)_{k \in \mathbb N} \in c_0}{\text{$\xi_1 \in [-6,4]$ and $\xi_k \in [-1,1]$ for $k\geq 2$}}.
\]
It is easy to check that $C$ is convex and contains $B$. Moreover, for any 
$x = (\xi_k)_{k \in \mathbb{N}} \in C$, we can find $\lambda \in [0,1]$ such that
$\xi_1=-6\lambda +4(1-\lambda)$. This allows us to write
\[
 x = \lambda(-6,\xi_2,\xi_3,\ldots) + (1-\lambda)(4,\xi_2,\xi_3,\ldots) \in \conv B. 
\] 
This observation directly shows that $C=\conv B$ and that $B$ satisfies the Carath\'eodory property. By Proposition~\ref{prop:caratheodory_properties}, the translated set $B - 3e_1$ also has the Carath\'eodory property. 

Yet, a simple calculation gives $B \cap (B - 3e_1) = A$ and $B \dotminus \{0, 3e_1\} = A$, illustrating that intersections and Minkowski differences of sets having the Carath\'eodory property do not necessarily have the property themselves (cf. Remark~\ref{rem:no_weak_C}).
\end{remark}

The importance of sets with the (weak) Carath\'eodory property is highlighted by the following theorem. Although the proof is fairly straightforward, the result itself is still quite significant.

\begin{theorem}\label{thm:axiom_iii}
Let $E$ be a Banach space and let $\nu \colon \mathcal B_E \to [0,+\infty)$ be a set function that satisfies axioms~\ref{def:axiomatic_MNC_ii} and~\ref{def:axiomatic_MNC_iv}. Then, $\nu(\conv A)=\nu(A)$ for every set $A \in \mathcal B_E$ that has the weak Carath\'eodory property.  
\end{theorem}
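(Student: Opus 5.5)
By monotonicity~\ref{def:axiomatic_MNC_ii} and $A \subseteq \conv A$ we get $\nu(A) \leq \nu(\conv A)$ at once. So the whole task is the reverse inequality $\nu(\conv A) \leq \nu(A)$, and for it I would use the weak Carath\'eodory property together with convexity~\ref{def:axiomatic_MNC_iv}.

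Fix $A \in \mathcal B_E$ with the weak Carath\'eodory property, and choose $\lambda \in (0,1]$ with $\conv A = \lambda A + (1-\lambda)\conv A$, as in Remark~\ref{rem:wCp_equivalent_1}. Note that $\conv A$ is non-empty and bounded, since it lies in any closed ball containing $A$, so $\conv A \in \mathcal B_E$. Axiom~\ref{def:axiomatic_MNC_iv} may therefore be applied to the pair $A$, $\conv A$. This gives
\[
 \nu(\conv A) = \nu\bigl(\lambda A + (1-\lambda)\conv A\bigr) \leq \lambda \nu(A) + (1-\lambda)\nu(\conv A).
\]
All quantities here are finite real numbers, because $\nu$ takes values in $[0,+\infty)$. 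We may therefore subtract $(1-\lambda)\nu(\conv A)$ from both sides, which leaves $\lambda\nu(\conv A) \leq \lambda \nu(A)$. Since $\lambda > 0$, dividing by $\lambda$ yields $\nu(\conv A) \leq \nu(A)$. The case $\lambda = 1$ is covered as well: then $\conv A = A$ and the claim is trivial.

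I do not expect a genuine obstacle. The only points needing care are that $\conv A$ is an admissible argument, i.e.\ bounded and non-empty, and that the cancellation is legitimate because $\nu$ is finite-valued. Only axioms~\ref{def:axiomatic_MNC_ii} and~\ref{def:axiomatic_MNC_iv} are used, the latter just for the single parameter $\lambda$.
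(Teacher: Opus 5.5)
Your proof is correct and follows the paper's argument: apply convexity~\ref{def:axiomatic_MNC_iv} to the pair $A$, $\conv A$ with the weak Carath\'eodory parameter $\lambda$, cancel $(1-\lambda)\nu(\conv A)$ using finiteness of $\nu$, and get the reverse inequality from monotonicity. The only cosmetic difference is that the paper starts from the inclusion $\conv A \subseteq \lambda A + (1-\lambda)\conv A$ and invokes~\ref{def:axiomatic_MNC_ii} for that step, whereas you use the defining equality directly.
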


\begin{proof}
Since $A$ has the weak Carath\'eodory property, there exists $\lambda \in (0,1]$ such that $\conv A \subseteq \lambda A + (1-\lambda)\conv A$. Hence, in view of axioms~\ref{def:axiomatic_MNC_ii} and~\ref{def:axiomatic_MNC_iv}, we obtain 
\[
 \nu(\conv A) \leq \lambda \nu(A) + (1-\lambda) \nu(\conv A),
\]
which immediately yields $\nu(\conv A)\leq \nu(A)$. The opposite inequality follows from axiom~\ref{def:axiomatic_MNC_ii} and the fact that $A\subseteq \conv A$.
\end{proof}

From the classical Carath\'eodory’s theorem (see, for example,~\cite{Schneider}*{Theorem~1.1.4}) and Theorem~\ref{thm:axiom_iii} we immediately obtain the following corollary.

\begin{corollary}
If a Banach space is finite-dimensional, axiom~\ref{def:axiomatic_MNC_iii} is a consequence of the remaining axioms.
\end{corollary}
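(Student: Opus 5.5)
The plan is to reduce the corollary to Theorem~\ref{thm:axiom_iii}. In a finite-dimensional space every non-empty bounded set turns out to have the weak Carath\'eodory property, so axiom~\ref{def:axiomatic_MNC_iii} then holds for every set in $\mathcal B_E$.

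Let $E$ be a Banach space of dimension $d<\infty$, and let $\nu\colon\mathcal B_E\to[0,+\infty)$ satisfy axioms~\ref{def:axiomatic_MNC_i}, \ref{def:axiomatic_MNC_ii}, \ref{def:axiomatic_MNC_iv} and~\ref{def:axiomatic_MNC_v}. We must show that $\nu(\conv A)=\nu(A)$ for all $A\in\mathcal B_E$. In fact only axioms~\ref{def:axiomatic_MNC_ii} and~\ref{def:axiomatic_MNC_iv} will be used.

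\emph{Step 1: every $A\in\mathcal B_E$ has the Carath\'eodory property with $m=d+1$.} In the real case, fix a linear isomorphism $E\cong\mathbb R^d$. This map preserves convex combinations, so the classical Carath\'eodory theorem (\cite{Schneider}*{Theorem~1.1.4}) applies directly. It says that each point of $\conv A$ is a convex combination of at most $d+1$ points of $A$. In the complex case, $E$ is a real vector space of dimension $2d$, and convex hulls only involve real coefficients, so the same argument works with $m=2d+1$.

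\emph{Step 2: pass to the weak property.} By Proposition~\ref{prop:Cp_implies_wCp}, $A$ has the weak Carath\'eodory property with parameter $\lambda=1/m$.

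\emph{Step 3: conclude.} Theorem~\ref{thm:axiom_iii} now gives $\nu(\conv A)=\nu(A)$, which is axiom~\ref{def:axiomatic_MNC_iii}.

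There is no real obstacle. The only point needing care is the complex scalar field in Step~1, where convex hulls must be computed in the underlying real space of dimension $2d$. Note also that boundedness of $A$ is needed only so that $A$ and $\conv A$ lie in $\mathcal B_E$ and $\nu$ is defined on them.
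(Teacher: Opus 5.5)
Your proof is correct and follows the paper's route exactly: the classical Carath\'eodory theorem gives every finite-dimensional set the Carath\'eodory property, Proposition~\ref{prop:Cp_implies_wCp} upgrades this to the weak Carath\'eodory property, and Theorem~\ref{thm:axiom_iii} then yields axiom~\ref{def:axiomatic_MNC_iii} using only axioms~\ref{def:axiomatic_MNC_ii} and~\ref{def:axiomatic_MNC_iv}. Your treatment of the complex case, where you work in the underlying real space of dimension $2d$ and take $m=2d+1$, is a detail the paper leaves implicit.
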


A natural question is whether there exist Banach spaces with bounded, non-empty subsets that lack the weak Carath\'eodory property but for which the conclusion of Theorem~\ref{thm:axiom_iii} still holds; the next example provides a positive answer.

\begin{example}\label{ex:without_caratheodory_but_axiom_iii_works}
Consider the Banach space $c_0$ of real null sequences endowed with the supremum norm $\norm{\cdot}_\infty$, and its subset $A:=\dset{e_n \in c_0}{n \in \mathbb N} \cup \{0\}$, where $e_n$ denotes the $n$-th unit vector. As noted in Remark~\ref{rem:no_weak_C}, $A$ does not have the weak Carath\'eodory property. Our aim is to prove that for any set function $\nu \colon \mathcal{B}_{c_0} \to [0,+\infty)$ satisfying axioms~\ref{def:axiomatic_MNC_ii} and~\ref{def:axiomatic_MNC_iv}, we have $\nu(\conv A) = \nu(A)$.

For each $m \in \mathbb{N}$, define
\[
 A_m := \sum_{k=1}^{2^m} \frac{1}{2^m} A.
\]
It is easy to see that $A \subseteq A_m \subseteq A_{m+1} \subseteq \conv A$, and that $\nu(A_m) = \nu(A)$.

We claim that the Hausdorff distance between $A_m$ and $\conv A$ satisfies the estimate
\[
 d_H(A_m, \conv A) \leq \frac{1}{2^m} \quad \text{for any $m \in \mathbb N$}.
\] 
To see this, fix $m \in \mathbb{N}$ and take any $x \in \conv A$. Write
\[
 x = \sum_{k=1}^n \lambda_k e_k
\]
with the coefficients $\lambda_1,\dots,\lambda_n \in [0,1]$ such that $\sum_{k=1}^\infty \lambda_k \leq 1$. If $\lambda_j = 1$ for some $j$, then $x = e_j \in A \subseteq A_m \subseteq A_m + \ball_{c_0}(0, 2^{-m})$. Otherwise, for each $k$ choose $l_k \in \{0,1,\dots,2^m-1\}$ such that
\[
 \frac{l_k}{2^m} \leq \lambda_k < \frac{l_k + 1}{2^m}.
\]
Clearly, $l_1+\ldots+l_n \leq 2^m(\lambda_1 + \ldots + \lambda_n) \leq 2^m$. Define
\[
 y := \sum_{k=1}^n \frac{l_k}{2^m} e_k.
\]
We can treat $y$ as a sum of $2^m$ terms with coefficients $2^{-m}$, by filling in the remaining $2^m - (l_1 + \dots + l_n)$ places with $2^{-m} \cdot 0$ if necessary. Hence, $y \in A_m$. Moreover,
\[
 \norm{x - y}_\infty = \sup_{1 \leq k \leq n} \Bigl(\lambda_k - \frac{l_k}{2^m}\Bigr) \leq \frac{1}{2^m}.
\]
Therefore, $x = y+(x-y) \in A_m + \ball_{c_0}(0, 2^{-m})$. Since $A_m \subseteq \conv A \subseteq \conv A + \ball_{c_0}(0, 2^{-m})$, it follows that 
\[
 d_H(A_m, \conv A) \leq \frac{1}{2^m}.
\]

Finally, by the continuity of $\nu$ with respect to the Hausdorff distance 
(see Proposition~\ref{prop:continuity_nu}) and the equality $\nu(A)=\nu(A_m)$, we obtain $\nu(A) = \nu(\conv A)$.
\end{example}

\begin{remark}
We would like to emphasize that the approach used in Example~\ref{ex:without_caratheodory_but_axiom_iii_works} works because of the geometry of $c_0$. If we take the same set $A$, but now regard it as a subset of $l^1$ -- the Banach space of all absolutely summable sequences equipped with its standard norm $\norm{\cdot}_1$ -- the situation changes completely. In this case, it is easy to see that $d_H(A_m, \conv A)=1$ for all $m \in \mathbb N$. 
\end{remark}

Following the ideas of Example~\ref{ex:without_caratheodory_but_axiom_iii_works}, Theorem~\ref{thm:axiom_iii} extends naturally to relatively compact sets.

\begin{theorem}\label{thm:axiom_iii_added}
Let $E$ be a Banach space and let $\nu \colon \mathcal B_E \to [0,+\infty)$ be a set function that satisfies axioms~\ref{def:axiomatic_MNC_ii} and~\ref{def:axiomatic_MNC_iv}. Then, $\nu(\conv A)=\nu(A)$ for every set $A \in \mathcal K_E$.  
\end{theorem}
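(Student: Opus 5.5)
Following Example~\ref{ex:without_caratheodory_but_axiom_iii_works}, we approximate $\conv A$ in the Hausdorff distance by sets of the form
\[
 A_m := \sum_{k=1}^{m} \tfrac{1}{m} A ,
\]
which are averages of copies of $A$, and then pass to the limit using continuity. The inequality $\nu(A)\le\nu(\conv A)$ is immediate from axiom~\ref{def:axiomatic_MNC_ii}, so only the reverse inequality needs work.

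First, since $A\subseteq A_m$ (take all summands equal) and $A_m\subseteq\conv A$, monotonicity gives $\nu(A)\le\nu(A_m)$. In the other direction, iterating axiom~\ref{def:axiomatic_MNC_iv} gives
\[
 \nu\Bigl(\tfrac1m A+\tfrac{m-1}{m}\cdot\tfrac{1}{m-1}\textstyle\sum_{k=1}^{m-1}A\Bigr)\le \tfrac1m\nu(A)+\tfrac{m-1}{m}\nu(A_{m-1}).
\]
By induction this yields $\nu(A_m)\le\nu(A)$, so $\nu(A_m)=\nu(A)$ for every $m$. By Proposition~\ref{prop:continuity_nu}, it then suffices to show that $d_H(A_m,\conv A)\to0$ along some subsequence of $m$.

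The key step, and the main obstacle, is this Hausdorff approximation for relatively compact $A$. Fix $\varepsilon>0$. By total boundedness, choose a finite $\varepsilon$-net $\{y_1,\dots,y_p\}\subseteq A$. Every $x\in\conv A$ is a convex combination of points of $A$. Replacing each such point by a net point within $\varepsilon$ moves $x$ by at most $\varepsilon$ and produces a point $z=\sum_{i=1}^p t_iy_i$ with $t_i\ge0$ and $\sum t_i=1$. Next, round the coefficients: choose integers $l_i\ge0$ with $\sum_i l_i=m$ and $|t_i-l_i/m|\le 1/m$, for instance by taking floors and distributing the leftover units. Then $w:=\sum_i (l_i/m)y_i$ lies in $A_m$, and $\|z-w\|\le (p/m)\,r$, where $A\subseteq\ball_E(0,r)$. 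Hence
\[
 \conv A\subseteq A_m+\ball_E\bigl(0,\varepsilon+pr/m\bigr).
\]
Together with $A_m\subseteq\conv A$, this gives $d_H(A_m,\conv A)\le\varepsilon+pr/m$. Since $p$ depends only on $\varepsilon$, we get $\limsup_m d_H(A_m,\conv A)\le\varepsilon$ for every $\varepsilon>0$, so $d_H(A_m,\conv A)\to0$. Only finite-dimensional rounding is needed here: the compactness reduces the problem to the span of finitely many points, which is why no geometry of the particular space enters, unlike the $c_0$ example.

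Finally, Proposition~\ref{prop:continuity_nu} applies with $\lambda=\tfrac12$, since axiom~\ref{def:axiomatic_MNC_iv} includes that case. It gives $\nu(\conv A)=\lim_m\nu(A_m)=\nu(A)$. The only delicate points are checking that the induction for $\nu(A_m)\le\nu(A)$ uses exactly the convex combination weights $\tfrac1m,\tfrac{m-1}{m}$, and that $A_m$ is indeed the Minkowski sum of scaled copies, so that $w\in A_m$ even when several $l_i$ refer to the same $y_i$.
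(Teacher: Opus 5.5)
Your proof is correct, but it follows a different route from the paper.

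\textbf{The paper's route.} The finite case does the work there. The paper takes finite $\tfrac1m$-nets $A_m\subseteq A$ and invokes Theorem~\ref{thm:axiom_iii} (finite sets are finite-dimensional, so they have the weak Carath\'eodory property) to get $\nu(A_m)=\nu(\conv A_m)$. It then applies continuity twice, using the immediate estimates $d_H(A_m,A)\le\tfrac1m$ and $d_H(\conv A_m,\conv A)\le\tfrac1m$.

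\textbf{Your route.} You keep $A$ fixed and approximate $\conv A$ by the Minkowski averages $\tfrac1m(A+\dots+A)$, extending the mechanism of Example~\ref{ex:without_caratheodory_but_axiom_iii_works} from powers of $2$ to all $m$. Their $\nu$-value equals $\nu(A)$ directly from axioms~\ref{def:axiomatic_MNC_ii} and~\ref{def:axiomatic_MNC_iv}, via $A_m=\tfrac1m A+\tfrac{m-1}{m}A_{m-1}$. Compactness enters only in proving $d_H(A_m,\conv A)\to0$, through an $\varepsilon$-net and rounding of barycentric coefficients. Your floor-and-distribute step is fine provided each index receives at most one extra unit. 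This is possible because the number of leftover units, $\sum_i\{mt_i\}$, is an integer smaller than $p$.

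\textbf{What each buys.} Your argument bypasses the (weak) Carath\'eodory machinery entirely and uses continuity only once. It also isolates a general sufficient condition: $\nu(\conv A)=\nu(A)$ holds whenever the averages $\tfrac1m(A+\dots+A)$ converge to $\conv A$ in $d_H$ along some subsequence. That single mechanism covers both the $c_0$ set of Example~\ref{ex:without_caratheodory_but_axiom_iii_works} and all relatively compact sets. The paper's argument needs two approximating sequences but no rounding estimate, since both Hausdorff bounds are trivial once the finite case is available.
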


\begin{remark}
Before proceeding to the proof of Theorem~\ref{thm:axiom_iii_added}, note that this result is trivial when $\ker \nu = \mathcal{K}_E$, since in this case $\nu(\conv A)=\nu(A)=0$ by Mazur's theorem (see, for example,~\cite{BBK}*{Corollary~1.1.22} or~\cite{Diestel1984}*{Exercise~1, p.~4}). In general, however, the kernel of the set function $\nu$ may be strictly contained in $\mathcal{K}_E$, and the claim is then no longer immediate.
\end{remark}

\begin{proof}[Proof of Theorem~\ref{thm:axiom_iii_added}]
It is well-known that if $A\subseteq E$ is a relatively compact set, then for every positive integer $m$ there exist points $a_1, \ldots, a_{n_m} \in A$ such that
\[
 A \subseteq \bigcup_{k=1}^{n_m} \ball_E\bigl(a_k, \tfrac{1}{m}\bigr).
\]
Set $A_m:=\dset{a_k}{k=1,\ldots,n_m}$. Since each $A_m$ is finite-dimensional, Theorem~\ref{thm:axiom_iii} implies that $\nu(A_m) = \nu(\conv A_m)$. Moreover, for each $m \in \mathbb N$ we have
\[
 A \subseteq A_m + \ball_E\bigl(0, \tfrac{1}{m}\bigr) \quad \text{and} \quad
 A_m \subseteq A \subseteq A + \ball_E\bigl(0, \tfrac{1}{m}\bigr),
\]
so that $d_H(A, A_m) \leq \frac{1}{m}$. Similarly, 
\[
 \conv A \subseteq \conv\bigl(A_m + \ball_E(0, \tfrac{1}{m})\bigr) 
 \subseteq \conv A_m + \ball_E\bigl(0, \tfrac{1}{m}\bigr)
\]
and
\[
 \conv A_m \subseteq \conv A \subseteq \conv A + \ball_E\bigl(0, \tfrac{1}{m}\bigr),
\]
which gives $d_H(\conv A, \conv A_m) \leq \frac{1}{m}$. Hence, the sequences $(A_m)_{m \in \mathbb N}$ and $(\conv A_m)_{m \in \mathbb N}$ converge to $A$ and $\conv A$, respectively, in the Hausdorff distance, and by continuity of $\nu$ we obtain
\[
 \nu(A) = \lim_{m \to \infty} \nu(A_m) = \lim_{m \to \infty} \nu(\conv A_m) = \nu(\conv A).
\]
This completes the proof.
\end{proof}

\subsection{Axiom~\ref{def:axiomatic_MNC_iv}} 
To show that axiom~\ref{def:axiomatic_MNC_iv} is independent of the others let us consider the set function $\nu \colon \mathcal B_{E} \to [0,+\infty)$ given by $\nu(A) = \sqrt{\norm{A}}$. Since the norm function $A \mapsto \norm{A}$ is an axiomatic measure of non-compactness it is straightforward to check that $\nu$ satisfies axioms~\ref{def:axiomatic_MNC_i}--\ref{def:axiomatic_MNC_iii} as well as axiom~\ref{def:axiomatic_MNC_v}. However, $\nu$ fails to satisfy axiom~\ref{def:axiomatic_MNC_iv}. To see why, let us fix a point $x\in E$ of norm $1$ and let us look at two singletons $A = \{x\}$, $B \coloneqq \{0\}$, and the number $\lambda \coloneqq \frac{1}{4}$. Then, we find that $\nu(\lambda A + (1-\lambda)B)=\frac{1}{2}$, while $\lambda\nu(A)+(1-\lambda)\nu(B)=\frac{1}{4}$.

\subsection{Axiom~\ref{def:axiomatic_MNC_v}} 
\label{sec:v}

In a recent paper~\cite{MR4686571}, X.~Chen and L.~Cheng addressed several questions concerning the representation and construction of axiomatic measures of non-compactness, as well as the existence of inequivalent regular measures. In Section~8 of that paper they stated, without proof, a result (Theorem~8.3) claiming that in every infinite-dimensional Banach space axiom~\ref{def:axiomatic_MNC_v} is independent of the remaining axioms. They noted that the details were contained in a preprint entitled \emph{On the fullness of measure of non-compactness} by J.~Bana\'s, X.~Chen, L.~Cheng, and W.~He; however, despite our efforts, we were unable to locate this manuscript. Instead, the material referred to in~\cite{MR4686571}*{Section~8} appears in~\cite{ChenChengHe2023}. Since that paper is written entirely in Chinese, we restate the result here and supply a sketch of the proof for the convenience of the reader. 

\begin{theorem}[cf.~\cite{MR4686571}*{Theorem~8.3} and \cite{ChenChengHe2023}*{Theorem~4.1}]\label{thm:axiom_v_failure}
In each infinite-dimensional Banach space $E$ there exists a set function $\nu \colon \mathcal B_E \to [0,+\infty)$ that satisfies axioms~\ref{def:axiomatic_MNC_i}--\ref{def:axiomatic_MNC_iv}, yet fails to satisfy axiom~\ref{def:axiomatic_MNC_v}.  
\end{theorem}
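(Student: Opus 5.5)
The plan is to take $\nu$ to be the diameter computed with respect to a \emph{weaker} norm $q$ on $E$. Such a $\nu$ automatically satisfies axioms~\ref{def:axiomatic_MNC_i}--\ref{def:axiomatic_MNC_iv}. The norm $q$ will be tuned so that a suitable sequence of closed sets without convergent subsequences has $q$-diameters tending to zero. Such sets then violate axiom~\ref{def:axiomatic_MNC_v}.

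\emph{Construction of $q$.} As in the proof of Theorem~\ref{thm:no_weak_C}, choose a closed subspace $F\subseteq E$ with a Schauder basis $(e_n)_{n\in\mathbb N}$ of unit vectors. Let $K$ be its basis constant. The coordinate functionals on $F$ satisfy $\norm{e_n^*}\leq 2K$, and we extend them by Hahn--Banach to $f_n\in E^*$ with the same norms. Define
\[
 q(x):=\sum_{n=1}^\infty 2^{-n}\abs{f_n(x)}+\operatorname{dist}(x,F), \qquad x\in E.
\]
Clearly $q$ is a seminorm with $q(x)\leq (2K+1)\norm{x}_E$. It is in fact a norm. Indeed, if $q(x)=0$, then $x\in F$ because $F$ is closed. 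Moreover, all coordinates $e_n^*(x)=f_n(x)$ vanish, so $x=0$ by uniqueness of the basis expansion.

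\emph{Verification of axioms~\ref{def:axiomatic_MNC_i}--\ref{def:axiomatic_MNC_iv}.} Set $\nu(A):=\sup_{a,b\in A}q(a-b)$, which is finite on $\mathcal B_E$.
\begin{itemize}
 \item[(A1)] Since $q$ is a norm, $\ker\nu$ consists exactly of the singletons. This family is non-empty and contained in $\mathcal K_E$.
 \item[(A2)] Monotonicity is obvious.
 \item[(A3)] Invariance under convex hulls is the standard argument for diameters. For fixed $b$, the map $a\mapsto q(a-b)$ is convex, so its supremum over $\conv A$ equals its supremum over $A$; applying this twice gives $\nu(\conv A)=\nu(A)$.
 \item[(A4)] Convexity follows from
 \[
 q\bigl(\lambda(a-a')+(1-\lambda)(b-b')\bigr)\leq\lambda q(a-a')+(1-\lambda)q(b-b').
 \]
\end{itemize}
Equivalently, the whole verification repeats the known fact that $\diam$ is an axiomatic measure of non-compactness, with $\norm{\cdot}_E$ replaced by $q$; the only place where the norm property of $q$ is needed is (A1).

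\emph{Failure of axiom~\ref{def:axiomatic_MNC_v}.} Put $A_n:=\dset{e_k}{k\geq n}$. For $j\neq k$ we have
\[
 \norm{e_j-e_k}_E\geq \abs{e_j^*(e_j-e_k)}/\norm{e_j^*}\geq 1/(2K).
\]
Hence $(e_k)$ has no Cauchy subsequence, so each $A_n$ is closed. The sets $A_n$ are non-increasing with empty intersection. On the other hand, for $j,k\geq n$ with $j\neq k$, we have $e_j-e_k\in F$, so the distance term vanishes and
\[
 q(e_j-e_k)=2^{-j}+2^{-k}\leq 2^{1-n}.
\]
Thus $\nu(A_n)\to 0$ although $\bigcap_n A_n=\emptyset$.

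The only step needing care is ensuring that $q$ is a genuine norm, because $E$ itself need not admit a weaker norm built from countably many functionals alone. The term $\operatorname{dist}(x,F)$ resolves this: it kills exactly the directions outside $F$ that the functionals $f_n$ cannot detect.
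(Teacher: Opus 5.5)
Your proof is correct and follows the same strategy as the paper. Both take $\nu$ to be the diameter with respect to a weaker, inequivalent norm on $E$ that is small on differences of tail elements of a normalized Schauder basic sequence, and both test (A5) on the tails $A_n=\{e_k : k\geq n\}$. The only real difference is how the weaker norm is built: the paper sets $\abs{x}_F=\norm[\big]{\sum_k 2^{-(k+2)}\xi_k(x)x_k}_E$ on $F$ and extends it to $E$ by $\inf_{y\in F}\{\abs{y}_F+\norm{x-y}_E\}$, whereas you use Hahn--Banach extensions of the coordinate functionals plus $\operatorname{dist}(x,F)$, which gives the exact value $q(e_j-e_k)=2^{-j}+2^{-k}$ and a direct separation argument for the closedness of $A_n$.
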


\begin{proof}
In the proof we will rely on several notions and results from functional analysis (see, for example, in~\cite{Diestel1984}*{Chapter~V}).

A classical theorem of Mazur ensures that $E$ contains a closed infinite-dimensional subspace $F$ admitting a Schauder basis $(x_n)_{n \in \mathbb{N}}$ consisting of unit vectors. Hence, every element $x \in F$ has a unique expansion
\[
  x = \sum_{k=1}^\infty \xi_k(x) x_k,
\]
where $(\xi_k(x))_{k \in \mathbb{N}}$ is a scalar sequence. The same theorem further guarantees that the associated linear projections $P_n \colon F \to F$, defined by
\[
  P_n(x) = \sum_{k=1}^n \xi_k(x)\, x_k,
\]
form a uniformly bounded family of operators, with operator norms bounded by $2$.

Fix an element $x \in F$. Observe that
\begin{align*}
 \sum_{k=2}^\infty \frac{\abs{\xi_k(x)}}{2^{k+2}} \leq \frac{1}{4}\sup_{k\geq 2}\abs{\xi_k(x)} = \frac{1}{4} \sup_{k \geq 2} \norm{P_k(x)-P_{k-1}(x)}_E \leq \norm{x}_E.
\end{align*}
Hence, the series
\[
  \sum_{k=1}^\infty \frac{\xi_k(x)}{2^{k+2}}\, x_k
\]
is absolutely convergent in $E$. Consequently, the expression
\[
 \abs{x}_F := \norm[\Bigg]{\sum_{k=1}^\infty \frac{\xi_k(x)}{2^{k+2}} x_k}_E
\]
is well-defined. This defines a norm on $F$, which is not equivalent to the norm $\norm{\cdot}_E$ inherited from $E$, and satisfies $\abs{x}_F \leq \norm{x}_E$ for all $x \in F$. Using $\abs{\cdot }_F$ we can define a new norm on $E$, inequivalent to $\norm{\cdot}_E$, by setting  
\[
 \abs{x}_E := \inf\dset[\big]{\abs{y}_F + \norm{x-y}_E}{y \in F}.
\]
By construction, this norm agrees with $\abs{\cdot}_F$ on $F$. The verification of all norm properties for $\abs{\cdot}_F$ and $\abs{\cdot}_E$ is straightforward.

Finally, define $\nu \colon \mathcal B_E \to [0,+\infty)$ by $\nu(A):=\sup_{x,y\in A}\abs{x-y}_E$. The set mapping $\nu$ satisfies axioms~\ref{def:axiomatic_MNC_i}--\ref{def:axiomatic_MNC_iv} (cf. examples in Section~\ref{sec:aMNCs}). However, $\nu$ fails to satisfy axiom~\ref{def:axiomatic_MNC_v}. To see this, consider the non-increasing sequence of bounded sets $A_n:=\{x_n,x_{n+1},\ldots\} \subseteq F$, where $n \in \mathbb N$. Each $A_n$ is closed in $(E,\norm{\cdot}_E)$. Indeed, if a subsequence $(x_{n_k})_{k \in \mathbb{N}} \subseteq A_n$ converged to some $x \in E$, then $x \in F$ since $F$ is closed in $E$. By uniqueness of the Schauder expansion, for any fixed $l \in \mathbb N$ and all sufficiently large $k$ we would have $\xi_l(x_{n_k})=0$, and the continuity of the coefficient functionals $\xi_l$ would then imply that $\xi_l(x) = 0$ for all $l \in \mathbb N$, so that $x = 0$. This is impossible, as $\norm{x_{n_k}}_E = 1$ for all $k \in \mathbb N$. 

As a byproduct, this also implies that the intersection $\bigcap_{n=1}^\infty A_n$ is empty. However, for each $n \in \mathbb{N}$, we have
\begin{align*}
 \nu(A_n) &= \sup_{i,j\geq n}\abs{x_i-x_j}_E=\sup_{i,j\geq n}\abs{x_i-x_j}_F\\
& = \sup_{i,j\geq n}\abs[\bigg]{\frac{1}{2^{i+2}} - \frac{1}{2^{j+2}}} \leq \frac{1}{2^{n+1}} \to 0 \ \text{as $n \to +\infty$}.
\end{align*}       
The proof is complete
\end{proof}

\begin{remark}
Note that, by definition, the set function $\nu$ introduced in the proof of Theorem~\ref{thm:axiom_v_failure} is also sub-additive and homogeneous, that is, $\nu(A+B)\leq \nu(A)+\nu(B)$ and $\nu(\alpha A)=\abs{\alpha} \nu(A)$ for all $A,B \in \mathcal B_E$ and $\alpha \in \mathbb R$.
\end{remark}

Let us observe that the set function $\nu$, whose existence is guaranteed by Theorem~\ref{thm:axiom_v_failure}, enables the construction of a wide class of set functions satisfying all the axioms except axiom~\ref{def:axiomatic_MNC_v}. Indeed, for any convex, strictly increasing function $h \colon [0,+\infty) \to \mathbb R$ that is continuous at $0$ and satisfies $h(0)=0$, the composition $h \circ \nu$ defines such a set function.

In what follows, we present another such set function, which possesses all the relevant properties of~$\nu$ or $h \circ \nu$ while being essentially different from them.

\begin{example}\label{ex:independence_of_axiom_v}
Consider the Banach space $C[0,1]$ of all continuous real-valued functions defined on the interval $[0,1]$, equipped with the supremum norm $\norm{\cdot}_\infty$.

For each function $f \in C[0,1]$ we define its average value by
\[
 m(f) \coloneqq \int_0^1 f(t)\,\textup dt.
\]
Using this, we introduce the set function $\nu \colon \mathcal B_{C[0,1]} \to [0,+\infty)$ by setting
\[
 \nu(A) \coloneqq \sup_{f \in \conv A} \biggl(\int_0^1 \abs{f(t)-m(f)}^2\,\textup dt \biggr)^{\frac{1}{2}}\!.
\]
The kernel of $\nu$ is non-empty. In fact, a simple argument shows that $\ker \nu$ consists exactly of the bounded subsets of $C[0,1]$ whose elements are constant functions. 

The fact that $\nu$ satisfies axioms~\ref{def:axiomatic_MNC_ii} and~\ref{def:axiomatic_MNC_iii} follows directly from its definition and elementary properties of the convex hull. Now, let us check that $\nu$ satisfies axiom~\ref{def:axiomatic_MNC_iv}.  Using the identity~\eqref{eq:convex} we see that any function $h \in \conv(\lambda A + (1-\lambda) B)$ can be expressed as $h=\lambda f + (1-\lambda)g$ for some $f \in \conv A$ and $g \in \conv B$. Thus, by linearity of the integral and the Minkowski inequality, we obtain
\begin{align*}
& \biggl(\int_0^1 \abs{h(t)-m(h)}^2 \,\textup dt\biggr)^{\frac{1}{2}}\\
&\qquad \leq  \lambda \biggl(\int_0^1 \abs{f(t)-m(f)}^2 \,\textup dt\biggr)^{\frac{1}{2}} +(1-\lambda)\biggl(\int_0^1 \abs{g(t)-m(g)}^2 \,\textup dt\biggr)^{\frac{1}{2}}\!.
\end{align*}
In other words,
\[
\biggl(\int_0^1 \abs{h(t)-m(h)}^2 \,\textup dt\biggr)^{\frac{1}{2}} \leq \lambda \nu(A) + (1-\lambda)\nu(B)
\]
for every $h \in \conv(\lambda A + (1-\lambda) B)$. Therefore, taking the supremum over all such $h$, we get
\[
 \nu(\lambda A + (1-\lambda) B) \leq \lambda \nu(A) + (1-\lambda)\nu(B),
\]
which shows that $\nu$ satisfies axiom~\ref{def:axiomatic_MNC_iv}.

Finally, we will show that $\nu$ does not satisfy axiom~\ref{def:axiomatic_MNC_v}. 
For each $n \in \mathbb{N}$, let us define the set
\[
 A_n  :=  \dset[\big]{f \in C[0,1]}{\text{$\norm{f}_\infty \leq 1$, $f(0)=0$, and $f(t)=1$ if $t \in [\tfrac{1}{n},1]$}}.
\]
The idea of using the sets $A_n$ comes from the paper~\cite{ARGF}, where, in the
course of studying fixed point theorems, the authors introduced a set function
that satisfies axiom~\ref{def:axiomatic_MNC_v}, provided the underlying Banach
space has sufficiently nice geometric properties (see~\cite{ARGF}*{Example~2.3 and Proposition~2.4}). 

Each $A_n$ is convex, bounded, and closed, and together they form a non-increasing sequence. Moreover, the intersection of these sets is empty. Indeed, suppose there existed a function $f_\ast \in \bigcap_{n =1}^\infty A_n$. Then, for every $n \in \mathbb N$ we would have $f_\ast(t) = 1$ on $[\tfrac{1}{n},1]$, so in particular $f_\ast(t) = 1$ for all $t \in (0,1]$.  But $f_\ast(0)=0$ by the definition of the sets $A_n$, which contradicts the continuity of $f_\ast$ at $0$. Thus, $\bigcap_{n=1}^\infty A_n=\emptyset$. So, all that is left is to show that $\nu(A_n) \to 0$ as $n \to +\infty$. Fix $n \in \mathbb N$, and let $f \in A_n$. We start by estimating the average value $m(f)$. Since $f(t)=1$ on the interval $[\frac{1}{n},1]$, we have
\begin{align*}
 m(f) &= \int_0^{\frac{1}{n}} f(t) \,\textup dt + \biggl(1-\frac{1}{n}\biggr).
\end{align*}
Now, because $-1 \leq f(t) \leq 1$ for $t \in [0,1]$, it follows that
\[
 -\frac{1}{n} \leq \int_0^{\frac{1}{n}} f(t) \,\textup dt \leq \frac{1}{n}.
\]
Combining these inequalities, we get the bound $-1 \leq 1- \frac{2}{n} \leq m(f) \leq 1$. To calculate the outer integral once again we look at the two parts of the interval $[0,1]$ separately. Since $\abs{f(t)-m(f)} \leq 2$ on $[0,\frac{1}{n}]$ we have
\[
 \int_0^{\frac{1}{n}} \abs{f(t)-m(f)}^2 \,\textup dt \leq \frac{4}{n}.
\]
On the other hand, for $t \in [\frac{1}{n},1]$ we know that $f(t)=1$, so $\abs{f(t)-m(f)} = 1-m(f) \leq \frac{2}{n}$. Therefore,
\[
 \int_{\frac{1}{n}}^1 \abs{f(t)-m(f)}^2 \,\textup dt \leq \frac{4}{n^2}\cdot \biggl(1-\frac{1}{n}\biggr) \leq \frac{4}{n^2} \leq \frac{4}{n}.
\]
Putting both pieces together, for any  $f \in A_n$ we get
\[
 \int_{\frac{1}{n}}^1 \abs{f(t)-m(f)}^2 \,\textup dt \leq \frac{8}{n}.
\]
Hence,
\[
 \nu(A_n)=\sup_{f \in A_n} \biggl(\int_0^1 \abs{f(t)-m(f)}^2 \,\textup dt \biggr)^{\frac{1}{2}} \leq \frac{3}{\sqrt{n}}.
\]
This shows that $\nu(A_n) \to 0$ as $n \to +\infty$, which completes the example.
\end{example}

\section{Concluding remarks}

The discussion carried out in this paper allows us to restate the definition of an axiomatic measure of non-compactness in a simpler but \emph{equivalent} form. In the case of finite-dimensional spaces, Definition~\ref{def:axiomatic_MNC} can be reformulated as follows.

\begin{definition}
Let $E$ be a finite-dimensional Banach space. A function $\mu \colon \mathcal B_E \to [0,+\infty)$ is called an \emph{axiomatic measure of non-compactness}, if
\begin{enumerate}[label=\textup{(a\arabic*)}]
 \item the family $\ker \mu  \coloneqq \dset{A \in \mathcal B_E}{\mu(A)=0}$ is non-empty,

 \item for any $A,B \in \mathcal B_E$ if $A \subseteq B$, then $\mu(A)\leq \mu(B)$,

 \item there exists $\lambda \in (0,1)$ such that $\mu(\lambda A + (1-\lambda)B)\leq \lambda \mu(A) + (1-\lambda)\mu(B)$ for all $A,B \in \mathcal B_E$.
\end{enumerate}
\end{definition}

In the case of infinite-dimensional spaces, Definition~\ref{def:axiomatic_MNC} is equivalent to the following formulation.

\begin{definition}
Let $E$ be an infinite-dimensional Banach space. A function $\mu \colon \mathcal B_E \to [0,+\infty)$ is called an \emph{axiomatic measure of non-compactness}, if
\begin{enumerate}[label=\textup{(a\arabic*)}]
 \item the family $\ker \mu  \coloneqq \dset{A \in \mathcal B_E}{\mu(A)=0}$ is non-empty,

 \item for any $A,B \in \mathcal B_E$ if $A \subseteq B$, then $\mu(A)\leq \mu(B)$,

 \item $\mu(\conv A)=\mu(A)$ for any $A \in \mathcal B_E$,

 \item there exists $\lambda \in (0,1)$ such that $\mu(\lambda A + (1-\lambda)B)\leq \lambda \mu(A) + (1-\lambda)\mu(B)$ for all $A,B \in \mathcal B_E$,

 \item if the sets $A_n$ are closed and $A_{n+1}\subseteq A_n$ for $n \in \mathbb N$ with $\mu(A_n)\to 0$, then their intersection $\bigcap_{n=1}^\infty A_n$ is non-empty.
\end{enumerate}
\end{definition}

We end the paper with an open question concerning the independence of axiom~\ref{def:axiomatic_MNC_iv}.

\begin{question}
Does every infinite-dimensional Banach space $E$ admit a set function 
$\nu \colon \mathcal B_E \to [0,+\infty)$ that satisfies all the axioms of 
Definition~\ref{def:axiomatic_MNC} except axiom~\ref{def:axiomatic_MNC_iii}?
\end{question}

\subsection*{Use of Large Language Models}
All mathematical results, proofs and arguments presented in this work were independently derived, fully formalized, and rigorously verified by the authors. No statement in this paper was accepted on the basis of model output alone, and the LLM was not used as a sole source of validated mathematical claims. Its role was limited to informal assistance in discussion and writing.


\begin{bibdiv}
  \begin{biblist}
\bib{AbletChengChengZhang2019}{article}{
  author={Ablet, E.},
  author={Cheng, L.},
  author={Cheng, Q.},
  author={Zhang, W.},
  title={Every Banach space admits a homogeneous measure of non-compactness not equivalent to the Hausdorff measure},
  journal={Sci. China Math.},
  volume={62},
  date={2019},
  pages={147--156 (with a corrected proof, Sci. China Math. \textbf{62} (2019), 2053--2056)},
}


\bib{AKPRS}{book}{
  author={Akhmerov, R. R.},
  author={Kamenskii , M. I.},
  author={Potapov, A. S.},
  author={Rodkina, A. E.},
  author={Sadovskii , B. N.},
  title={Measures of noncompactness and condensing operators},
  series={Operator Theory: Advances and Applications},
  volume={55},
  publisher={Birkh\"auser Verlag, Basel},
  date={1992},
}



\bib{ARGF}{article}{
  author={Ariza-Ruiz, D.},
  author={Garc\'ia-Falset, J.},
  title={Abstract measures of noncompactness and fixed points for nonlinear mappings},
  journal={Fixed Point Theory},
  volume={21},
  number={1},
  pages={47--66},
  date={2020},
}

\bib{ADL}{book}{
  author={Ayerbe Toledano, J. M.},
  author={Dom\'{\i }nguez Benavides, T.},
  author={L\'{o}pez Acedo, G.},
  title={Measures of noncompactness in metric fixed point theory},
  series={Operator Theory: Advances and Applications},
  volume={99},
  publisher={Birkh\"{a}user Verlag, Basel},
  date={1997},
}

\bib{BG80}{book}{
  author={Bana\'{s}, J.},
  author={Goebel, K.},
  title={Measures of noncompactness in Banach spaces},
  series={Lecture Notes in Pure and Applied Mathematics},
  volume={60},
  publisher={Marcel Dekker, Inc., New York},
  date={1980},
}

\bib{BanasMartinon1992}{article}{
  author={Bana\'s, J.},
  author={Mart\'{\i}n{\'o}n, A.},
  title={Measures of noncompactness in Banach sequence spaces},
  journal={Math. Slovaca},
  volume={42},
  date={1992},
  pages={497--503},
}

\bib{MR3289625}{book}{
   author={Bana\'s, J.},
   author={Mursaleen, M.},
   title={Sequence spaces and measures of noncompactness with applications
   to differential and integral equations},
   publisher={Springer, New Delhi},
   date={2014},
}

\bib{MR3642943}{collection}{
   title={Advances in nonlinear analysis via the concept of measure of
   noncompactness},
   editor={Bana\'s, J.},
   editor={Jleli, M.},
   editor={Mursaleen, M.},
   editor={Samet, B.},
   editor={Vetro, C.},
   publisher={Springer, Singapore},
   date={2017},
}

\bib{BBK}{book}{
  author={Borkowski, M.},
  author={Bugajewska, D.},
  author={Kasprzak, P.},
  title={Selected topics in nonlinear analysis},
  publisher={Juliusz Schauder Center for Nonlinear Studies},
  address={Toru\'n, Poland},
  series={Lecture Notes in Nonlinear Analysis},
  volume={19},
  date={2021},
}

\bib{BugajewskiGulgowski2019}{article}{
  author={Bugajewski, D.},
  author={Gulgowski, J.},
  title={On the characterization of compactness in the space of functions of bounded variation in the sense of Jordan},
  journal={J. Math. Anal. Appl.},
  volume={484},
  date={2019},
	pages={article no. 123752, 17 pages},
}

\bib{ChenCheng2021}{article}{
  author={Chen, X.},
  author={Cheng, L.},
  title={On countable determination of the Kuratowski measure of noncompactness},
  journal={J. Math. Anal. Appl.},
  volume={504},
  date={2021},
  pages={article no. 125370, 20 pages},
}

\bib{ChenCheng2023}{article}{
  author={Chen, X.},
  author={Cheng, L.},
  title={Representation of measures of noncompactness and its applications related to an initial-value problem in Banach spaces},
  journal={Sci. China Math.},
  volume={66},
  date={2023},
  pages={745--776},
}

\bib{MR4686571}{article}{
   author={Chen, X.},
   author={Cheng, L.},
   title={A review on several questions related to measure of
   noncompactness},
   journal={Pure Appl. Funct. Anal.},
   volume={8},
   date={2023},
   number={6},
   pages={1621--1650},
}

\bib{ChenChengHe2023}{article}{
  author={Chen, X.},
  author={Cheng, L.},
  author={He, W.},
  title={On nullity and fullness of measures of noncompactness},
  journal={Sci. China Math.},
  volume={53},
  number={12},
  pages={1577--1596},
  date={2023},
}

\bib{ChengChengShenTuZhang2018}{article}{
  author={Cheng, L.},
  author={Cheng, Q.},
  author={Shen, Q.},
  author={Tu, K.},
  author={Zhang, W.},
  title={A new approach to measures of noncompactness of Banach spaces},
  journal={Studia Math.},
  volume={240},
  date={2018},
  pages={21--45},
}


\bib{Diestel1984}{book}{
  author={Diestel, J.},
  title={Sequences and series in Banach spaces},
  publisher={Springer-Verlag},
  place={New York},
  date={1984},
  series={Graduate Texts in Mathematics},
  volume={92},
}

\bib{DominguezBenavides1986}{article}{
  author={Dom\'{\i}nguez Benavides, T.},
  title={Some properties of the set and ball measures of noncompactness and applications},
  journal={J. London Math. Soc.},
	number={2},
  volume={34},
  date={1986},
  pages={120--128},
}


\bib{Darbo}{article}{
  author={Darbo, G.},
  title={Punti uniti in trasformazioni a codominio non compatto},
  journal={Rend. Sem. Mat. Univ. Padova},
  volume={24},
  date={1955},
  pages={84--92},
}

\bib{GM65}{article}{
  author={Gol\cprime denshte\u {\i }n, L. S.},
  author={Markus, A. S.},
  title={On the measure of non-compactness of bounded sets and of linear operators},
  language={in Russian},
  conference={ title={Studies in Algebra and Math. Anal. (Russian)}, },
  book={ publisher={Izdat. ``Karta Moldovenjaske'', Kishinev}, },
  date={1965},
  pages={45--54},
}

\bib{GGM57}{article}{
  author={Gol\cprime denshte\u {i}n, L. S.},
  author={Gohberg, I.},
  author={Markus, A. S.},
  title={Investigation of some properties of bounded linear operators and of the connection with their $g$-norm},
  journal={Uchen. Zap. Kischinev. Gos. Univ.},
  volume={29},
  date={1957},
  pages={29\ndash 36},
  language={in Russian},
}

\bib{Krukowski2016}{article}{
  author={Krukowski, M.},
  title={Darbo-type theorem for quasimeasure of noncompactness},
  journal={arXiv preprint},
  date={2016},
  url={https://arxiv.org/abs/1605.05229},
}


\bib{Krukowski2017}{book}{
  author={Krukowski, M.},
  title={Compact families in spaces of continuous functions with the topology of uniform convergence},
	year={2017},
	publisher={PhD thesis, Łódź, Poland},	
}


\bib{Kuratowski}{article}{
  author={Kuratowski, K.},
  title={Sur les espaces complets},
  year={1930},
  journal={Fund. Math.},
  volume={15},
  number={1},
  pages={301--309},
}

\bib{MalletParetNussbaum2011a}{article}{
  author={Mallet-Paret, J.},
  author={Nussbaum, R.~D.},
  title={Inequivalent measures of noncompactness and the radius of the essential spectrum},
  journal={Proc. Amer. Math. Soc.},
  volume={139},
  date={2011},
  pages={917--930},
}

\bib{MalletParetNussbaum2011b}{article}{
  author={Mallet-Paret, J.},
  author={Nussbaum, R.~D.},
  title={Inequivalent measures of noncompactness},
  journal={Ann. Mat. Pura Appl.},
  volume={190},
  date={2011},
  pages={453--488},
}

\bib{Martinon1990}{article}{
   author={Martin\'{o}n, A},
   title={A system of axioms for measures of noncompactness},
   journal={Zeszyty Nauk. Politech. Rzeszowskiej Mat. Fiz.},
   number={10},
   date={1990},
   pages={133--143},
}

\bib{Sadovskii}{article}{
  author={Sadovskii, B. N.},
  title={On a fixed point principle},
  language={in Russian},
  journal={Funkcional. Anal. i Prilo\v {z}en.},
  volume={1},
  date={1967},
  number={2},
  pages={74--76},
}

\bib{Sadovskii1968}{article}{
  author={Sadovskii, B.~N.},
  title={Measures of noncompactness and condensing operators},
  journal={Problemy Mat. Anal. Slozh. Sistem},
  volume={2},
  date={1968},
  pages={89--119},
  language={Russian},
}


\bib{Sadovskii1972}{article}{
  author={Sadovskii, B.~N.},
  title={Limit-compact and condensing operators},
  journal={Uspekhi Mat. Nauk},
  volume={27},
  number={1(163)},
  date={1972},
  pages={81--146},
  translation={
    journal={Russian Math. Surveys},
    volume={27},
    number={1},
    pages={85--155}
  },
}


\bib{Schneider}{book}{
  author={Schneider, R.},
  title={Convex bodies\textup: The Brunn--Minkowski theory},
  series={Encyclopedia of Mathematics and its Applications},
  volume={151},
  publisher={Cambridge University Press},
  address={Cambridge},
  date={2014},
	edition={2nd edition},
}

 \end{biblist}
\end{bibdiv}
\end{document}